\documentclass[12pt]{amsart}

\usepackage[a4paper,left=2.5cm,right=2.5cm,top=2.5cm,bottom=2.5cm]{geometry}

\usepackage{amsmath}
\newcommand\numberthis{\addtocounter{equation}{1}\tag{\theequation}}

\newcommand{\R}{\mathbb{R}}

\usepackage{graphicx}
\usepackage{amssymb}
\usepackage{enumerate}
\usepackage{setspace}
\usepackage{color}

\newtheorem{theorem}{Theorem}[section]
\newtheorem{lemma}[theorem]{Lemma}
\newtheorem{proposition}[theorem]{Proposition}
\newtheorem{corollary}[theorem]{Corollary}
\newtheorem{remark}[theorem]{Remark}
\numberwithin{equation}{section}

\begin{document}
\title{Projections of Gibbs measures on self-conformal sets}

\author{Catherine Bruce}
\address{School of Mathematics, University of Manchester, Oxford Road, Manchester M13 9PL, United Kingdom}
\email{catherine.bruce@manchester.ac.uk}

\author{Xiong Jin}
\address{School of Mathematics, University of Manchester, Oxford Road, Manchester M13 9PL, United Kingdom}
\email{xiong.jin@manchester.ac.uk}

\begin{abstract}
We show that for Gibbs measures on self-conformal sets in $\mathbb{R}^d$ ($d\ge 2$) satisfying certain minimal assumptions, without requiring any separation condition, the Hausdorff dimension of orthogonal projections to $k$-dimensional subspaces is the same and is equal to the maximum possible value in all directions. As a corollary we show that Falconer's distance set conjecture holds for this class of self-conformal sets satisfying the open set condition.
\end{abstract}

\maketitle


\vspace{20pt}
\section{Introduction}
Let $d\ge k\ge 2$ be integers, let $K\subset\mathbb{R}^d$ be Borel or analytic, and let $\Pi_{d,k}$ be the set of orthogonal projections from $\mathbb{R}^d$ to its $k$-dimensional subspaces, with natural Haar measure $\xi$. Then
\[
\dim_H\pi K=\min\{k,\dim_HK\}\text{ for }\xi\text{-almost every }\pi\in\Pi_{d,k}.
\]
This famous result, due to Marstrand \cite{Marstrand} in the plane and Mattila \cite{Mattila} in $\mathbb{R}^d$, has been the basis for a great deal of work in the field of fractal geometry. Until fairly recently, most of this work concerned general Borel sets $K$ and almost all projections $\pi\in \Pi_{d,k}$. However, Furstenburg's innovative CP-chain method \cite{Furs} enabled Hochman and Shmerkin \cite{HochShmer} to show that for self-similar sets and measures with {\em dense rotations} and which satisfy the strong separation condition, the result holds for all $\pi\in\Pi_{d,k}$. Since then, their work has been followed up by many mathematicians, see the recent survey papers \cite{FFJ,Shmerkin,MattSur} and the references therein. In particular, Falconer and Jin \cite{FalcJin} extended their result to random cascade measures (including self-similar measures as special cases) without requiring any separation condition.

In \cite{HochShmer} Hochman and Shmerkin also considered the projections of products of Gibbs measures on one-dimensional non-linear Cantor sets. The authors used the so-called {\em limit diffeomorphisms} of one-dimensional non-linear iterated function systems developed by Sullivan \cite{Sullivan} and Bedford and Fisher \cite{BF} to transfer the problem back to the affine case. In higher dimensions, Fraser and Pollicott \cite{FraserPoll} showed that for Gibbs measures on self-conformal sets with the strong separation condition there exists a limit conformal map under which the Gibbs measures generate a CP-chain. But the strong projection theorem for self-conformal measures cannot be directly proved from this result since the dimension of projections is not preserved under conformal maps, and the {\em dense rotations} condition is not clear in their setting.

The main difficulty of studying CP-chains/scenery flows of self-conformal measures comes from the fact that linear ``zooming-in" operators do not fit well with the non-linear iterated function systems. In this paper we use the methods from Falconer and Jin \cite{FalcJin}, along with those from Hochman and Shmerkin \cite{HochShmer}, to overcome this difficulty. The main idea (Lemma \ref{lemma:zoom}) is first to zoom-in on measures on the symbolic space, or in other words, to zoom-in with conformal mappings, in order to generate a CP-chain, then zoom-in on the conditional measures in the CP-chain with linear scale functions to estimate the entropy distortions. We also make clear how to formulate an analogue of the {\em dense rotations} condition for the minimality of the underlining dynamical system. The methods from \cite{FalcJin} also remove the requirement of any separation condition on the underlying sets.

Here we will consider an iterated function system (IFS) of conformal $C^{1+\epsilon}$-maps $\mathcal{I}=\{f_i\}_{i=1}^m$ ($m\ge 2$) in $\mathbb{R}^d$ satisfying the following assumptions:
\begin{itemize}
\item[(A0)] There is a bounded, convex open set $U\subset \mathbb{R}^d$ such that each $f_i:U\to U$ is an injective conformal map, that is $f_i(x)\in U$, the derivative $f_i'(x)$ exists for every $x\in U$ and is a scalar times a rotation matrix, which we may write as
\begin{equation}\label{eqn:ifs}
f_i'(x)=r_i(x)O_i(x),
\end{equation}
where $r_i(x)\in (0,\infty)$ and $O_i(x)\in SO(d,\mathbb{R})$.
\item[(A1)] There exists a constant $0<r^*<1$ such that $r_i(x)\le r^*$ for every $1\le i \le m$ and $x\in U$.
\end{itemize}

\begin{remark}
Here we assume the open set $U$ to be convex just for simplicity. It is sufficient to assume that $U$ is connected. The only proof that will be affected by this is Lemma \ref{lemma:projdis}, where instead of connecting two points in $U$ by a line segment, we can connect two points by smooth curves within $U$, see \cite{Pat} for example.
\end{remark}

These assumptions imply that the IFS $\mathcal{I}$ is uniformly contractive on $U$, therefore it defines a unique attractor $K\subset U$, i.e. a non-empty compact set such that
\begin{equation}
\label{eqn:attractor}
K=\bigcup_{i=1}^mf_i(K).
\end{equation}
Such a $K$ is called a \emph{self-conformal set}. The set $K$ has a natural symbolic representation: let $\Lambda=\{1,\ldots,m\}$ be the alphabet and let $\Lambda^\mathbb{N}$ be the symbolic space with $m$ letters. For each $i=i_1\cdots i_n\in \Lambda^n$ denote by
\[
f_i=f_{i_1}\circ \cdots \circ f_{i_n}
\]
and for $\underline{i}=i_1i_2\cdots\in\Lambda^\mathbb{N}$ and $n\ge 1$ denote by $\underline{i}|_n=i_1\cdots i_n$. Fix a point $x_0\in U$. Then we may define a map $\Phi:\Lambda^\mathbb{N}\to U$ by
\[
\Phi(\underline{i})=\lim_{n\to \infty} f_{\underline{i}|_n}(x_0).
\]
Since all $f_i$ are injective, the above limit always exists and it does not depend on the choice of $x_0$. Its image is the self-conformal set $K$ and $\Phi$ is called the canonical map.

For $i=i_1\cdots i_n\in \Lambda^n$ denote by $[i]=\{\underline{i}\in\Lambda^\mathbb{N}:\underline{i}|_n=i\}$ the cylinder in $\Lambda^\mathbb{N}$ encoded by $i$. Let $\mathcal{B}$ denote the $\sigma$-algebra generated by cylinders. Let $\sigma$ denote the left-shift operator on $\Lambda^\mathbb{N}$. Let $\varphi:\Lambda^\mathbb{N} \to \mathbb{R}$ be a H\"older potential on $\Lambda^\mathbb{N}$ and let $\mu_\varphi$ denote its Gibbs measure (see Section \ref{subsec:gibbs} for precise definition). We are interested in the orthogonal projections of the push-forward measure
\[
\Phi\mu_\varphi=\mu_\varphi\circ \Phi^{-1}
\]
on the self-conformal set $K$. Before stating our main result, we shall present an analogue of the {\em dense rotations} condition in the self-conformal case. Let $G=SO(d,\mathbb{R})$ and define a map $\phi:\Lambda^\mathbb{N}\to G$ as follows:
\[
\phi(\underline{i})=O_{i_1}(\Phi(\sigma \underline{i})) \text{ for } \underline{i}=i_1i_2\cdots.
\]
Then we may define the skew product $\sigma_\phi:\Lambda^\mathbb{N}\times G\to \Lambda^\mathbb{N}\times G$ as
\[
\sigma_\phi(\underline{i},O)=(\sigma \underline{i}, O\phi(\underline{i})).
\]
We assume
\begin{itemize}
\item[(A2)] $\sigma_\phi$ has a dense orbit in $\Lambda^\mathbb{N}\times G$.
\end{itemize}
By a compact group extension theorem (see Subsection \ref{sec:cge}), this implies that the dynamical system
\[
(\Lambda^\mathbb{N}\times G,\mathcal{B}\otimes \mathcal{B}_G,\sigma_\phi,\mu_\varphi \times \xi)
\]
is ergodic, where $\xi$ is the normalised right-invariant Haar measure on $G$, and $\mathcal{B}_G$ is its Borel $\sigma$-algebra.
\begin{remark}
In the above dynamical system, (A2) is equivalent to topological transitivity, that is, for any non-empty open sets $U,V\in\Lambda^{\mathbb{N}}\times G$, there exists $n>1$ such that $\sigma_{\phi}^n(U)\cap V$ is non-empty. See \cite{Silverman} for details. We shall prove (see Lemma \ref{lemma:doimpltt}), that if there exists a dense orbit $\{O_{\underline{i}|_n}(\Phi(\sigma^n\underline{i}))\}_{n\ge1}$ in $G$, then this implies topological transitivity.
\end{remark}
Now we are ready to state our main result:

\begin{theorem}\label{thm:main}
Under assumptions (A0), (A1) and (A2), for all $\pi\in \Pi_{d,k}$ we have
\[
\dim_H \pi \Phi\mu_\varphi=\min\{k,\dim_H \Phi\mu_\varphi\}.
\]
\end{theorem}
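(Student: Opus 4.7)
The plan is to adapt the Hochman--Shmerkin / Falconer--Jin CP--chain machinery to the self-conformal setting by means of a two-layer zooming procedure, with assumption (A2) (via the compact group extension theorem) playing the role that the dense rotations hypothesis plays in the affine case. Since $\mu_\varphi$ is $\sigma$-invariant, the shift on $\Lambda^{\mathbb{N}}$ is a natural \emph{symbolic zoom} whose pushforward under $\Phi$ models, up to a conformal change of variables by $f_{i_1}^{-1}$, zooming into $\Phi\mu_\varphi$ at a typical point. Coupling the symbolic dynamics with the rotation cocycle $\phi$ gives the skew product $\sigma_\phi$ on $\Lambda^{\mathbb{N}} \times G$, which by (A2) and the compact group extension argument is ergodic for $\mu_\varphi \times \xi$; the rotation marginal is then the full Haar measure on $G$, so the limiting distribution of orientations is rotationally invariant.

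First I would use this symbolic skew product, together with the push-forward $\Phi$, to build an $\mathcal{F}$-invariant distribution $Q$ on pointed measures --- essentially a \emph{conformal} CP-distribution whose magnification operator is $f_{i_1}^{-1}$ rather than a dyadic dilation. Next, precisely as Lemma \ref{lemma:zoom} is designed for, I would apply a second layer of purely \emph{linear} rescaling to the conditional measures of $Q$: on a cylinder of small diameter the map $f_i$ has bounded conformal distortion, so a conformal zoom centred at a typical symbolic point is close, both in the weak-$*$ sense and in the sense of entropy on dyadic partitions, to a composition of a homothety and a rotation drawn from the orientation cocycle. This is the step which requires the $C^{1+\epsilon}$ regularity of (A0) and a Koebe-type distortion estimate; it linearises the system so that the standard CP-chain technology applies.

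Once the linearised CP-chain has been produced, I would invoke the projection theorem for ergodic CP-distributions whose rotation component is supported on all of $G$, in the form used by Falconer and Jin \cite{FalcJin} (this is the version that, unlike \cite{HochShmer}, does not require any separation condition on the underlying attractor $K$). Because the rotation marginal coming from $\mu_\varphi \times \xi$ is the full Haar measure $\xi$, this theorem delivers the projection entropy $\min\{k,\dim_H \Phi\mu_\varphi\}$ simultaneously in every direction $\pi \in \Pi_{d,k}$. Translating entropy dimensions of projections to Hausdorff dimensions --- using that Gibbs measures on conformal attractors are exact dimensional, hence so are their projections once we know their entropy dimension matches the upper bound --- then yields the equality claimed in Theorem \ref{thm:main}.

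The main obstacle is the second layer of zooming, i.e.\ controlling the discrepancy between the natural conformal zoom produced by the shift and the linear zoom required by the projection theorem. There are two sources of trouble: the cylinder diameters contract at a random rate dictated by the potential $\log r_{i_1}(\Phi(\sigma\underline{i}))$, so the natural zoom times are not commensurate with a deterministic dyadic scale and must be matched via a suspension/return-time construction; and the linear approximation error introduced by replacing $f_{\underline{i}|_n}^{-1}$ by its derivative at $\Phi(\underline{i})$ must be made negligible uniformly over $\pi \in \Pi_{d,k}$ at the scales where projection entropy is read off. Once these two issues are handled in Lemma \ref{lemma:zoom}, the remaining work is essentially a direct transplantation of the affine arguments of \cite{FalcJin} to the conformal setting.
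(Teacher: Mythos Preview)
Your overall strategy is correct and close to the paper's, but the implementation differs in a way worth noting. The paper does \emph{not} build a CP-distribution and then invoke a black-box projection theorem from \cite{FalcJin}; instead it works directly with local entropy averages. Concretely: it recodes to the stopping-time alphabet $\Lambda_q$ of \eqref{eqn:redefsymsp} (this is the paper's substitute for your suspension/return-time construction), uses ergodicity of $\sigma_{\phi_q}$ on $\Lambda_q^{\mathbb N}\times G$ together with lower semicontinuity of the scaled entropy $e_q$ to get, for $\xi$-a.e.\ $O$ and \emph{all} $\pi$, a lower bound on the ergodic averages of $H_{C_1^2\rho^q}(\pi O\cdot O_{\underline i|_n}(x_{\sigma^n\underline i})\Phi_q\mu_q)$, and then Lemma~\ref{lemma:zoom} converts this into a lower bound on the averages $\frac{1}{N}\sum_n H_{(\underline r\rho^q)^{n+1}}(\pi O\Phi_q\mu_{q,[\underline i|_n]})$. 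These are exactly the local entropy averages that feed into the Hochman--Shmerkin tree machinery (their Theorem~4.4 and Propositions~5.2--5.4), yielding $\dim_H\pi O\Phi\mu\ge E_q(\pi)$ up to errors vanishing as $q\to\infty$. The final identification $E(\pi)=\min\{k,\dim_H\Phi\mu\}$ is not obtained from a CP-chain projection theorem but by combining this lower bound with Fatou's lemma on \eqref{eqpi} and the classical Marstrand--Mattila theorem for measures; this last step is missing from your outline. Your route via an honest linearised CP-chain could presumably be made to work, but the paper's direct entropy-average argument sidesteps the need to verify that the linearised object is a genuine CP-distribution in the Hochman--Shmerkin sense.
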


With the same approach as in \cite{HochShmer} we can also prove the following.

\begin{corollary}\label{thm:nonli}
Under assumptions (A0), (A1) and (A2), for all $C^1$-maps $h:K\to \mathbb{R}^k$ without singular points,
\[
\dim_H h \Phi\mu_\varphi=\min\{k,\dim_H \Phi\mu_\varphi\}.
\]
\end{corollary}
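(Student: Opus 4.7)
The plan is to follow Hochman and Shmerkin \cite{HochShmer} in reducing a $C^1$-submersion to an orthogonal projection by local linearization, and then invoke Theorem \ref{thm:main}. Because $h$ has no singular points, the differential $Dh(x):\mathbb{R}^d\to\mathbb{R}^k$ is surjective at every $x\in K$; via the polar decomposition of $Dh(x)^T$ I can write $Dh(x)=A_x\pi_x$ with $A_x\in GL(k,\mathbb{R})$ and $\pi_x\in\Pi_{d,k}$, both depending continuously on $x$, so the operator norms $\|A_x\|$ and $\|A_x^{-1}\|$ are uniformly bounded on the compact set $K$. For any $\epsilon>0$, by uniform continuity of $Dh$ on a neighborhood of $K$ I can cover $K$ by finitely many balls $B_j=B(x_j,r_j)$ on which
\[
\|h(y)-h(x_j)-Dh(x_j)(y-x_j)\|\le\epsilon\|y-x_j\|\quad(y\in B_j).
\]

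The Gibbs measure $\Phi\mu_\varphi$ on a self-conformal set is exact-dimensional by standard thermodynamic formalism, so whenever $\mu_j:=\Phi\mu_\varphi|_{B_j}$ is nontrivial it has $\dim_H\mu_j=\dim_H\Phi\mu_\varphi=:s$. Theorem \ref{thm:main} applied to the orthogonal projection $\pi_{x_j}$ (and to the measure $\mu_j$, since the argument is local) yields $\dim_H\pi_{x_j}\mu_j=\min\{k,s\}$, and since $A_{x_j}$ is a bi-Lipschitz linear map on $\mathbb{R}^k$, the affine approximation $L_j(y):=h(x_j)+A_{x_j}\pi_{x_j}(y-x_j)$ satisfies $\dim_H L_j\mu_j=\min\{k,s\}$.

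The main obstacle is to transfer this conclusion from $L_j$ to the true map $h$, i.e.\ to show that the $O(\epsilon r_j)$ nonlinear error does not decrease the Hausdorff dimension of the pushforward. I would appeal to the constant-rank theorem: on sufficiently small balls $h$ factors as $\eta_j\circ\pi_0\circ\psi_j$ with $\pi_0$ a fixed coordinate projection and $\psi_j,\eta_j$ being $C^1$-diffeomorphisms, so that $\dim_H h\mu_j=\dim_H\pi_0(\psi_j\mu_j)$ by the bi-Lipschitz invariance of Hausdorff dimension on compacta. Verifying that $\dim_H\pi_0(\psi_j\mu_j)=\min\{k,s\}$ is where the Hochman--Shmerkin machinery genuinely enters: one checks that the CP-chain/local entropy argument underlying Theorem \ref{thm:main} is robust under a $C^1$ change of coordinates; equivalently, one lets $\epsilon\to 0$ in a quantitative linearization and compares the small-scale entropies of $h\mu_j$ and $L_j\mu_j$ directly. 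Combining the estimates over the finite cover and using $\Phi\mu_\varphi(\bigcup_jB_j)=1$ then gives $\dim_H h\Phi\mu_\varphi\ge\min\{k,s\}$; the reverse inequality is automatic because $h$ is Lipschitz on $K$.
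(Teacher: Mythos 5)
There is a genuine gap, and it sits exactly where you locate it yourself: the transfer from the affine approximation $L_j$ (equivalently, from the coordinate projection after a constant-rank change of variables) to the true map $h$. Hausdorff dimension of images of a \emph{fixed} measure is not stable under small $C^1$ (or even linear) perturbations of the map -- this instability is the whole reason ``for all $\pi$'' projection theorems are hard -- so the assertion that the $O(\epsilon r_j)$ nonlinear error ``does not decrease the dimension of the pushforward'' cannot be taken for granted. The constant-rank detour does not repair this: after writing $h=\eta_j\circ\pi_0\circ\psi_j$ you need $\dim_H\pi_0(\psi_j\mu_j)=\min\{k,s\}$, but $\psi_j\mu_j$ is no longer a self-conformal Gibbs measure, so Theorem \ref{thm:main} does not apply to it, and saying that ``the CP-chain/local entropy argument is robust under a $C^1$ change of coordinates'' is essentially a restatement of the corollary to be proved. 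In other words, you use Theorem \ref{thm:main} as a black box where the paper has to reopen the box: the needed input is the quantitative statement (Proposition \ref{prop:nonli}, the analogue of \cite[Proposition 8.4]{HochShmer}) that if $\sup_{x\in K}\|D_xh-\pi\|<c\rho^q$ then the entropy-average lower bound of Proposition \ref{prop:lowbound} and Theorem \ref{thm:lowerbound} holds for $h$ up to an $O(1/q)$ loss, because a map whose derivative is within $c\rho^q$ of $\pi$ distorts $\rho^q$-scale entropies by only $O(1)$ bits.

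The quantitative structure also shows why your fixed finite cover with a fixed $\epsilon$ cannot be made to work by ``letting $\epsilon\to0$'' afterwards. To lose only $O(1/q)$ one needs the linearization error below $c\rho^q$, so as $q\to\infty$ one must restrict to smaller and smaller pieces; and one then needs the lower-bound machinery to apply \emph{uniformly} to each restricted piece. The paper gets this by restricting to cylinders $[\underline{i}|_n]$ rather than balls: by the quasi-Bernoulli property \eqref{eqn:quasibern'} the conditioned measures $\mu_{q,[\underline{i}|_n]}$ are comparable to $\mu_q$ itself, so the same quantities $E_q(\pi)$ (which converge to $\beta=\min\{k,\dim_H\Phi\mu\}$ by Corollary \ref{cor:dim}) control their entropy averages, and on $\Phi([\underline{i}|_n])$ the derivative $Dh$ varies as little as required. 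A restriction $\Phi\mu|_{B_j}$ to a Euclidean ball has no such self-conformal/quasi-Bernoulli structure, so even granting your (correct) observation that $\dim_H\pi(\nu|_B)\ge\dim_H\pi\nu$, the dynamical argument does not pass to it. To complete your proof you would need to (i) state and prove the analogue of Proposition \ref{prop:nonli}, and (ii) run it on cylinder restrictions with $n$ and $q$ tending to infinity together, which is precisely the paper's route; the upper bound via Lipschitzness is fine as you state it.
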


It is well-known that if the self-conformal set $K$ satisfies the open set condition (OSC) then there exists a Gibbs measure $\mu$ of a H\"older potential on $\Lambda^\mathbb{N}$ such that $\dim_H \Phi\mu=\dim_HK$, and $\mu$ is equivalent to $\dim_HK$-dimensional Hausdorff measure (see \cite{Bowen2,PRSS} for example). This implies the following.

\begin{corollary}\label{coro:nonli}
Under assumptions (A0), (A1) and (A2), as well as the OSC,  for all $C^1$-maps $h:K\to \mathbb{R}^k$ without singular points,
\begin{equation}
\label{eqn:mainsets}
\dim_H h(K)=\min\{k,\dim_HK\}.
\end{equation}
\end{corollary}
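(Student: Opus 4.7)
The plan is to derive Corollary~\ref{coro:nonli} directly from Corollary~\ref{thm:nonli} by applying it to a dimension--maximizing Gibbs measure on $K$. The paragraph immediately preceding the corollary reminds us that under the OSC there exists a H\"older potential $\varphi$ such that $\mu := \mu_\varphi$ satisfies $\dim_H \Phi\mu = \dim_H K$ (standard Bowen--Ruelle theory, \cite{Bowen2,PRSS}). Since assumptions (A0), (A1), (A2) are properties of the IFS alone, they are inherited automatically by this particular Gibbs measure, so Corollary~\ref{thm:nonli} applies.

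Invoking Corollary~\ref{thm:nonli} with this $\mu$ and the given $h$ yields
\[
\dim_H h\Phi\mu = \min\{k, \dim_H \Phi\mu\} = \min\{k, \dim_H K\}.
\]
Since $h\Phi\mu$ is supported on $h(K)$, the monotonicity of Hausdorff dimension under passage to support gives the lower bound
\[
\dim_H h(K) \ge \dim_H h\Phi\mu = \min\{k, \dim_H K\}.
\]

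For the matching upper bound I would note first that $h(K) \subseteq \mathbb{R}^k$ forces $\dim_H h(K) \le k$, and second that the $C^1$ regularity of $h$ on a neighbourhood of the compact set $K$ makes $h|_K$ Lipschitz, so $\dim_H h(K) \le \dim_H K$. Combining gives $\dim_H h(K) \le \min\{k, \dim_H K\}$, and together with the lower bound this is the desired equality.

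There is essentially no obstacle in the deduction itself: the substantive work is already packaged inside Corollary~\ref{thm:nonli}, and the only ingredient added here is the OSC--based existence of a Gibbs measure attaining $\dim_H K$. The ``without singular points'' hypothesis is used only within Corollary~\ref{thm:nonli} (to prevent $h$ from locally collapsing dimension); for the upper bound here only the $C^1$ regularity of $h$, giving Lipschitz control on the compact set $K$, is needed.
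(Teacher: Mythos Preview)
Your proposal is correct and matches the paper's approach exactly: the paper does not even spell out a proof of Corollary~\ref{coro:nonli}, but simply remarks in the preceding sentence that under the OSC there exists a Gibbs measure $\mu$ with $\dim_H\Phi\mu=\dim_HK$ and states that ``This implies the following.'' Your write-up is precisely the intended deduction from Corollary~\ref{thm:nonli}, with the routine upper bound supplied explicitly.
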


\begin{remark}
The OSC in the above Corollary can be relaxed to the so-called strong variational principle: there exists a H\"older potential $\varphi$ such that the corresponding Gibbs measure $\mu_\varphi$ satisfies $\dim_H \mu_\varphi=\dim_H K$. We believe that this should hold for our family of self-conformal sets.
\end{remark}

By applying Corollary \ref{coro:nonli} to $C^1$-maps $h(x)=|x-a|$ outside a neighborhood of $a\in K$ we deduce that Falconer's distance set conjecture (see \cite{Shmerkin2} and the references therein for  most recent developments), is true for this family of self-conformal sets:

\begin{corollary}\label{coro:FDC}
Under assumptions (A0), (A1) and (A2), as well as the OSC, if $\dim_H K\ge1$, then for $a\in K$,
\[
\dim_H \{|x-a|:x\in K\}=\dim_H \{|x-y|: x,y\in K\} =1.
\]
\end{corollary}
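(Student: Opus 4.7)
My strategy is to apply Corollary \ref{coro:nonli} with $k=1$ to a smoothed version of the distance function $x\mapsto|x-a|$. The obstruction is that this map fails to be $C^1$ at $x=a$, and in general $a\in K$, so we cannot feed it directly into Corollary \ref{coro:nonli}. My plan is to precompose with a cylinder map $f_{\un i}$ chosen so that $a\notin f_{\un i}(U)$; then the composition is $C^1$ on all of $U$ with no singular points, and its image of $K$ already lies inside the target distance set, which is enough to transfer the dimension lower bound.

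To select the cylinder, use $\dim_H K\ge 1>0$ to fix some $b\in K\setminus\{a\}$, and pick any symbolic preimage $\un j\in\Phi^{-1}(b)$. Assumption (A1) gives $\operatorname{diam} f_{\un j|_n}(U)\le (r^*)^n\operatorname{diam} U\to 0$, while the nested closures $\overline{f_{\un j|_n}(U)}$ all contain $b$. Hence for $n$ large enough $\overline{f_{\un j|_n}(U)}$ lies in the open ball around $b$ of radius $|a-b|$, so $a\notin f_{\un j|_n}(U)$; fix such an $n$ and put $\un i=\un j|_n$.

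Define $\tilde h:U\to\R$ by $\tilde h(x)=|f_{\un i}(x)-a|$. Since $f_{\un i}$ is $C^{1+\epsilon}$ and $y\mapsto|y-a|$ is smooth on $\R^d\setminus\{a\}$, the map $\tilde h$ is $C^1$ on $U$, and the chain rule together with (A0) gives
\[
D\tilde h(x)=\frac{(f_{\un i}(x)-a)^T}{|f_{\un i}(x)-a|}\,r_{\un i}(x)\,O_{\un i}(x),
\]
a row vector of Euclidean norm $r_{\un i}(x)>0$, hence nonzero everywhere on $U$. Thus $\tilde h|_K$ satisfies the hypotheses of Corollary \ref{coro:nonli}, which yields $\dim_H\tilde h(K)=\min\{1,\dim_H K\}=1$. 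Since
\[
\tilde h(K)=\{|y-a|:y\in f_{\un i}(K)\}\subseteq\{|y-a|:y\in K\}\subseteq\{|x-y|:x,y\in K\}
\]
and both distance sets are bounded subsets of $\R$, both have Hausdorff dimension exactly $1$. The only mildly delicate step is the cylinder selection; once $\un i$ is in hand, the result is a direct application of Corollary \ref{coro:nonli}.
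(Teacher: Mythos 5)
Your proposal is correct and follows essentially the same route as the paper, which deduces the corollary by applying Corollary \ref{coro:nonli} (with $k=1$) to the map $x\mapsto|x-a|$ away from a neighbourhood of $a$. Your precomposition with a cylinder map $f_{\underline{i}}$ whose image avoids $a$ is just a clean way of making that one-line reduction rigorous, and the details (nonvanishing derivative of norm $r_{\underline{i}}(x)$, inclusion of $\tilde h(K)$ in both distance sets, and the trivial upper bound of $1$) all check out.
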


\begin{remark}
Since self-conformal sets with OSC are Ahlfors-David regular, when $d=2$ and $\dim_H K>1$, Corollary \ref{coro:FDC} is actually a direct consequence of Theorem 1.1 in \cite{Shmerkin2}. But the case when $d>2$ or $\dim_H K=1$ is new to the best of our knowledge.
\end{remark}

Now we give an example of self-conformal sets which satisfy all of our assumptions. According to Theorem 14.15 in \cite{Falc}, when $|c|>\frac{1}{4}(5+2\sqrt{6})=2.475...$, the Julia set $J_{f_c}$ defined by the quadratic polynomial $f_c(z)=z^2+c$ is totally disconnected, and is the attractor of the conformal IFS
\[
\label{eqn:juliaconf}
\{f_1(z)=\sqrt{z-c},\;\;f_2(z)=-\sqrt{z-c}\}.
\]
We may take $U=\{z:|z|<|2c|^{1/2}\}$. It is easy to see that $U$ is bounded, open and convex, and for all $z\in U$,
\[
|f_i'(z)|=\frac{1}{2}|z-c|^{-1/2}\le \frac{1}{2}(|c|-|2c|^{1/2})^{-1/2}<1.
\]
This verifies assumptions (A0) and (A1), as well as the OSC. Now take a fixed point $\alpha=\frac{1+\sqrt{1-4c}}{2}\in J_{f_c}$ so that $f_1(\alpha)=\alpha$. We have
\[
f_1'(\alpha)=\frac{1}{2\sqrt{\alpha-c}}=\frac{1}{2\alpha}=\frac{1}{1+\sqrt{1-4c}}.
\]
Therefore, if
\begin{equation}
\label{eqn:argirr}
\frac{\arg f'(\alpha)}{\pi}=\frac{\arg(1+\sqrt{1-4c})}{\pi}\text{ is irrational,}
\end{equation}
then $\{O_{\underline{i}|_n}(x_{\sigma^n\underline{i}})\}_{n\ge1}$ is dense in $SO(2,\mathbb{R})$ for $\underline{i}=111\cdots$. This verifies assumption (A2).

\medskip

\begin{remark}
The above example is a particular case of hyperbolic Julia sets. It is worth mentioning that in \cite{BFU} Bedford, Fisher and Urba\'nski showed that the scenery flow of hyperbolic Julia sets (a geometric realization of our dynamical system $(\Lambda^\mathbb{N}\times G,\mathcal{B}\otimes\mathcal{B}_G,\mu\times\xi,\sigma_{\phi})$) is ergodic in all cases with the exception of the following:
\begin{enumerate}[i)]
\item the Julia set $J_f$ is a geometric circle and $f$ is biholomorphically conjugate to a finite Blaschke product, or
\item the Julia set $J_f$ is totally disconnected and $J_f$ is contained in a real-analytic curve with self-intersections (if any) lying outside the Julia set.
\end{enumerate}
It would be interesting to see if for hyperbolic Julia sets, our assumption (A2) can be replaced by the above criteria.
\end{remark}

The rest of the paper is organised as follows. In Section \ref{sec:prelim} we will first go through some background on symbolic space and self-conformal sets. We present the bounded distortion property which is satisfied in our setting. We will briefly mention the thermodynamic formalism which defines the Gibbs measure, through which we obtain an ergodic dynamical system. From this ergodicity we know from \cite{FengHu} that our Gibbs measure $\Phi\mu_\varphi$ is exact dimensional (see Section \ref{de} for the definition). A theorem on compact group extension will show us that the skew product of this dynamical system with $G$ is also ergodic. From here, having stated some definitions of entropy and dimension, we move on to the dimension of the projections of the self-conformal Gibbs measures. Section \ref{sec:dimofprojs} uses the methods of \cite{FalcJin,HochShmer}, to prove that the dimension of the projections of these measures takes the `expected' value for $\xi$-almost all $\pi\in\Pi_{d,k}$. Then following a similar argument of Hochman and Shmerkin \cite{HochShmer} we may extend the value to all $\pi\in\Pi_{d,k}$ and to all $C^1$-maps without singular points.

\section{Preliminaries}
\label{sec:prelim}
\subsection{Symbolic space}
Let $\Lambda=\{1,...,m\}$ be the alphabet on $m\ge2$ symbols. Let $\Lambda^*=\bigcup_{n\ge1}\Lambda^n$ be the set of finite words. For $i\in \Lambda^*$ let $|i|$ denote the length of the word. Let $\Lambda^{\mathbb{N}}$ be the symbolic space of infinite sequences from the alphabet. For $\underline{i}\in\Lambda^{\mathbb{N}},$ $n\ge1$, let $\underline{i}|_{n}\in\Lambda^{n}$ be the first $n$ digits of $\underline{i}$. For $i\in\Lambda^n$, let $[i]=\{\underline{i}\in\Lambda^{\mathbb{N}}:\underline{i}|_{n}=i\}$ be the \emph{cylinder} rooted at $i$. We may endow $\Lambda^{\mathbb{N}}$ with the standard metric $d_{\rho}$ with respect to a real number $\rho\in(0,1)$, that is, for $\underline{i}, \underline{j}\in\Lambda^{\mathbb{N}},$
\[
d_{\rho}(\underline{i}, \underline{j})=\rho^{\inf\{n\ge0\;:\;\underline{i}|_n\neq\underline{j}|_n\}},
\]
with the convention that $\underline{i}|_0=\emptyset$ for all $\underline{i}\in\Lambda^\mathbb{N}$.
Then $(\Lambda^{\mathbb{N}},d_{\rho})$ is a compact metric space. Let $\mathcal{B}$ be its Borel $\sigma-$algebra. Define the left shift map $\sigma$ by $\sigma(\underline{i})=(i_{n+1})_{n\ge1}$ for $\underline{i}=(i_n)_{n\ge1}\in\Lambda^{\mathbb{N}}.$

\subsection{Self-conformal sets}
Let $\mathcal{I}$ be an iterated function system (IFS) as in (\ref{eqn:ifs}) of conformal maps defined on a bounded open connected subset $U\subseteq\mathbb{R}^d$ with non-empty compact attractor $K\subseteq \mathbb{R}^d$ satisfying (\ref{eqn:attractor}). Since $\mathcal{I}$ is uniformly contractive on $U$, one can find a connected open set $V$ such that $K\subset V \subset \overline{V}\subset U$ and $\min\{\mathrm{dist}(K,\partial V),\mathrm{dist}(V,\partial U)\}>0$. Recall that $\Phi:\Lambda^{\mathbb{N}}\to K$ is the canonical projection, that is, $\Phi(\underline{i})=\lim_{n\to\infty}f_{\underline{i}|_n}(x_0)$ for some $x_0\in U$. We shall also use the notation $x_{\underline{i}}=\Phi( \underline{i})$ for $\underline{i}\in \Lambda^\mathbb{N}$.

\subsection{Bounded distortion}
For $x\in V$ and $i=i_1\cdots i_n \in\Lambda^n$ we may write
\begin{equation}\label{eqn:taylor}
f_i'(x)=r_i(x)O_i(x),
\end{equation}
where
\begin{align*}
r_i(x)&=r_{i_1}(f_{i_2}\circ\cdots\circ f_{i_n}(x))\cdot r_{i_2}(f_{i_3}\circ\cdots\circ f_{i_n}(x))\cdot\cdot\cdot r_{i_n}(x),\\
O_i(x)&=O_{i_1}(f_{i_2}\circ\cdots\circ f_{i_n}(x))\cdot O_{i_2}(f_{i_3}\circ\cdots\circ f_{i_n}(x))\cdot\cdot\cdot O_{i_n}(x).
\end{align*}
It is well-known that in a simply-connected complex domain every holomorphic function is analytic. Therefore we have the following {\em bounded distortion property}: there exists a constant $C_1>0$ such that for all $i\in \Lambda^*$ and $x,y\in V$,
\begin{equation}\label{eqn:bp}
\frac{r_i(x)}{r_i(y)} \le C_1.
\end{equation}
To see this, for $i=i_1\cdots i_n\in \Lambda^n$ we have
\begin{equation}\label{eqn:rij}
\log \frac{r_i(x)}{r_i(y)} =\sum_{k=1}^n \log r_{i_k}(f_{i_{k+1}\cdots i_{n}}(x))- \log r_{i_k}(f_{i_{k+1}\cdots i_{n}}(y)).
\end{equation}
By the smoothness of $\log r_i$ one can find a constant $\widetilde{C}_1$ such that for all $x,y\in V$ and $i\in\Lambda$,
\[
|\log r_i(x)-\log r_i(y)|\le \widetilde{C}_1 |x-y|.
\]
On the other hand, by (A1), one can find another constant $\widetilde{C}_1'$ such that for all $x,y\in V$, $n\ge 1$ and $i=i_1\cdots i_n\in \Lambda^n$,
\[
|f_i(x)-f_i(y)|\le \widetilde{C}_1' (r^*)^n.
\]
Combining these two inequalities we get from \eqref{eqn:rij} that
\[
\log \frac{r_i(x)}{r_i(y)} \le \widetilde{C}_1\widetilde{C}_1' \sum_{k=1}^n (r^*)^{n-k} \le \widetilde{C}_1\widetilde{C}_1' \frac{1}{1-r^*}:=\log C_1.
\]
The {\em bounded distortion} also implies the following fact: there exists a constant $C_2>0$ such that for all $i\in \Lambda^*$ and all $x,y\in V$,
\begin{equation}\label{eqn:diam}
C_2^{-1}\overline{r}_i|x-y|\le |f_i(x)-f_i(y)|\le C_2\overline{r}_{i}|x-y|,
\end{equation}
where for $i\in\Lambda^*$ we denote by $\overline{r}_i=\sup\{r_i(x):x\in V\}$. For a proof see \cite[Lemma 2.2]{Pat} for example (note that the proof does not require any separation condition).

\subsection{Gibbs measures}
\label{subsec:gibbs}
Let $\varphi$ be a \emph{H\"older potential} defined on $\Lambda^{\mathbb{N}}$. This means there exist constants $\kappa>0$ and $\beta\in(0,1)$ such that for $n\ge 1$,
\begin{equation}\label{eqn:beta}
\mathrm{Var}_n(\varphi):=\sup_{i\in \Lambda^n}\sup_{\underline{i},\underline{j}\in [i]} |\varphi(\underline{i})-\varphi(\underline{j})| \le \kappa \beta^n.
\end{equation}
For $n\ge1$ the \emph{$n$th-order Birkhoff sum} of $\varphi$ over $\sigma$ is defined as
\[
S_n\varphi(\underline{i})=\sum_{k=0}^{n-1}\varphi\circ\sigma^k(\underline{i}),
\]
for $\underline{i}\in\Lambda^{\mathbb{N}}$. The \emph{topological pressure} of $\varphi$ on $\Lambda^{\mathbb{N}}$ is given by
\[
P(\varphi)=\lim_{n\to\infty}\frac{1}{n}\log\sum_{j\in\Lambda^n}\exp\left(\max_{\underline{i}\in[j]}S_n\varphi(\underline{i})\right),
\]
where the existence of the limit can be proved using the sub-additive property of the logarithm on the right hand side. It follows from the thermodynamic formalism developed by Sinai, Ruelle, Bowen and Walters \cite{Bowen,Ruelle} that there exists a unique ergodic measure $\mu_{\varphi}$, namely the \emph{Gibbs measure} of $\varphi$  on $(\Lambda^{\mathbb{N}},\sigma)$, such that for any $\underline{i}\in \Lambda^{\mathbb{N}}$, $n\ge0$ and $\underline{j}\in[\underline{i}|_n]$,
\begin{equation}
\label{eqn:gibbs}
e^{-\mathrm{Var}_n(\varphi)}\le\frac{\mu_{\varphi}([\underline{i}|_n])}{\exp(S_n\varphi(\underline{j})-nP(\varphi))}\le e^{\mathrm{Var}_n(\varphi)}.
\end{equation}
Also, $\mu_{\varphi}$ possesses the \emph{quasi-Bernoulli property}:
\begin{equation}
\label{eqn:quasibern}
e^{-\kappa\frac{\beta^{|i|}}{1-\beta}-\kappa\beta^{|j|}}\mu_{\varphi}([i])\mu_{\varphi}([j])\le\mu_{\varphi}([ij])\le e^{\kappa\frac{\beta^{|i|}}{1-\beta}+\kappa\beta^{|j|}}\mu_{\varphi}([i])\mu_{\varphi}([j])
\end{equation}
for all $i,j\in\Lambda^*$. (Here we have been more precise on the \emph{quasi-Bernoulli constant} $e^{\kappa\frac{\beta^{|i|}}{1-\beta}+\kappa\beta^{|j|}}$ in terms of the length of $i$ and $j$.) In particular, when the potential function $\varphi:\Lambda^{\mathbb{N}}\to\mathbb{R}$ takes the values $\varphi(\underline{i})=\log p_{\underline{i}|_1}$ for a fixed vector $p=(p_i)_{i\in\Lambda}$, such that $0<p_i<1$ and $\sum_{i\in\Lambda}p_i=1$, noting that it is Lipschitz on $(\Lambda^{\mathbb{N}},d_\rho)$, then we can define a Gibbs measure $\mu_p$ on $\Lambda^{\mathbb{N}}$ by:
\begin{align*}
\mu_p([i])&=\exp S_n\varphi(\underline{i})\\
&=p_{i_1}\cdot\cdot\cdot p_{i_n},
\end{align*}
for $\underline{i}\in[i]$. This is simply the Bernoulli measure on $\Lambda^{\mathbb{N}}$.


\subsection{The compact group extension}
\label{sec:cge}
We will now deal with the system $(\Lambda^{\mathbb{N}},\mathcal{B},\sigma,\mu)$ and its compact group extensions, where $\mu=\mu_\varphi$ is a Gibbs measure with respect to a H\"older potential $\varphi$. Recall that $G=SO(d,\mathbb{R})$ is a compact Lie group with Borel $\sigma$-algebra $\mathcal{B}_G$ and we have defined the map $\phi:\Lambda^\mathbb{N}\to G$ as
\[
\phi(\underline{i})=O_{i_1}(\Phi(\sigma \underline{i})) \text{ for } \underline{i}=i_1i_2\cdots.
\]
By the smoothness of conformal maps it is easy to see that $\phi$ is H\"older on $(\Lambda^\mathbb{N},d_\rho)$. We may define the skew product $\sigma_\phi:\Lambda^\mathbb{N}\times G\to \Lambda^\mathbb{N}\times G$ as follows:
\[
\sigma_\phi(\underline{i},O)=(\sigma \underline{i}, O\phi(\underline{i})).
\]
It is easy to verify that the product measure $\mu\times\xi$ is $\sigma_{\phi}$-invariant, where $\xi$ is the right-invariant normalised Haar measure on $G$. Under (A2) we have that $\sigma_\phi$ has a dense orbit in $\Lambda^\mathbb{N}\times G$.

\begin{proposition}
The dynamical system $(\Lambda^\mathbb{N}\times G,\mathcal{B}\otimes\mathcal{B}_G,\mu\times\xi,\sigma_{\phi})$ is ergodic.
\end{proposition}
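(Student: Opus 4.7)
The plan is to apply the Peter--Weyl decomposition on $G$ combined with the dense-orbit assumption (A2) and the H\"older regularity of the cocycle $\phi$. First observe that the left translation action $g\cdot(\underline{i},O)=(\underline{i},gO)$ of $G$ on $\Lambda^{\mathbb{N}}\times G$ commutes with $\sigma_\phi$ and preserves $\mu\times\xi$ (Haar measure is bi-invariant on the compact group $G$). Peter--Weyl therefore gives a $\sigma_\phi$-invariant orthogonal decomposition
\[
L^2(\mu\times\xi)=\bigoplus_{\pi\in\widehat G} H_\pi,
\]
indexed by the irreducible unitary representations of $G$. On the trivial component, functions depend only on $\underline{i}$ and $\sigma_\phi$ acts as $\sigma$; ergodicity here is the classical ergodicity of the Gibbs system $(\Lambda^{\mathbb{N}},\mathcal{B},\sigma,\mu)$.

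For a non-trivial irreducible $\pi:G\to U(V_\pi)$, a $\sigma_\phi$-invariant element of $H_\pi$ corresponds, via matrix coefficients of $\pi$, to a measurable $F:\Lambda^{\mathbb{N}}\to \mathrm{End}(V_\pi)$ satisfying $F(\sigma\underline{i})=\pi(\phi(\underline{i}))^{-1}F(\underline{i})$ for $\mu$-a.e.\ $\underline{i}$. Since $\pi\circ\phi$ is unitary-valued, $\|F\|_{\mathrm{HS}}$ is $\sigma$-invariant and hence $\mu$-a.e.\ equal to a constant by ergodicity. If the constant is zero then $F=0$. Otherwise I would upgrade $F$ to a continuous function defined everywhere on $\Lambda^{\mathbb{N}}$ via a Livsic-type rigidity theorem for compact group-valued H\"older cocycles over shifts equipped with Gibbs measures, which applies because $\phi$ is H\"older on $(\Lambda^{\mathbb{N}},d_\rho)$ by smoothness of the $O_i(\cdot)$ and $\mu$ has the quasi-Bernoulli/Gibbs structure recorded in \eqref{eqn:gibbs}.

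With $F$ continuous, (A2) supplies a point $\underline{i}_0$ whose $\sigma_\phi$-orbit $\{(\sigma^n\underline{i}_0,\phi^{(n)}(\underline{i}_0))\}_{n\ge 0}$ is dense in $\Lambda^{\mathbb{N}}\times G$, where $\phi^{(n)}(\underline{i}_0)=\phi(\underline{i}_0)\phi(\sigma\underline{i}_0)\cdots\phi(\sigma^{n-1}\underline{i}_0)$. Iterating the cocycle equation yields $F(\sigma^n\underline{i}_0)=\pi(\phi^{(n)}(\underline{i}_0))^{-1}F(\underline{i}_0)$. For any $(\underline{j},g)\in\Lambda^{\mathbb{N}}\times G$, pick $n_k\to\infty$ with $(\sigma^{n_k}\underline{i}_0,\phi^{(n_k)}(\underline{i}_0))\to(\underline{j},g)$; joint continuity then gives $F(\underline{j})=\pi(g)^{-1}F(\underline{i}_0)$. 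Fixing $\underline{j}$ and letting $g$ range over $G$ forces $\pi(h)F(\underline{i}_0)=F(\underline{i}_0)$ for every $h\in G$, so the image of $F(\underline{i}_0)$ is a $\pi$-invariant subspace of $V_\pi$ on which $\pi$ acts trivially; irreducibility and non-triviality of $\pi$ then force $F(\underline{i}_0)=0$, contradicting $\|F\|_{\mathrm{HS}}$ being a positive constant. Hence every non-trivial $\pi$-component vanishes and $\sigma_\phi$ is ergodic.

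The main obstacle is the regularity upgrade of $F$ from measurable to continuous: this is the non-abelian analogue of Livsic's theorem for compact group-valued H\"older cocycles over the full shift with a Gibbs measure. Versions of it are available in the literature but must be invoked with care for a matrix-valued cocycle equation in the present setting. A self-contained alternative is to apply Lusin's theorem to realise $F$ as continuous on a large closed set and exploit Birkhoff's ergodic theorem (together with the full-support property of $\mu$) to ensure enough recurrences of the dense orbit to this set so that the limiting argument above still goes through.
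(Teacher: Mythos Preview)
Your approach is correct in outline and, in fact, is essentially a reconstruction of the cited result: the paper's own proof is a one-line appeal to Corollary~4.5 of Parry's paper \cite{Parry97} on skew products of shifts with a compact Lie group, and Parry's argument proceeds exactly along the Peter--Weyl/cocycle-rigidity lines you sketch. So there is no genuine difference in strategy, only in level of detail: the paper outsources the work, while you unpack it.

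That said, two points deserve comment. First, the Liv\v{s}ic-type regularity upgrade you flag as ``the main obstacle'' is precisely the substance of Parry's theorem in this setting; invoking it is legitimate, but if your aim is a self-contained proof you should be explicit that you are using H\"older continuity of $\phi$ together with the specification/Gibbs structure of $\mu$ to conclude that any measurable solution of the twisted coboundary equation agrees $\mu$-a.e.\ with a continuous one. Second, your proposed Lusin-based alternative is shakier than you suggest: the dense orbit guaranteed by (A2) is a \emph{topological} datum and need not be $\mu$-generic, so there is no immediate reason its iterates land in the Lusin set with positive frequency. One can repair this (e.g.\ by first showing, via ergodicity of the base and full support of $\mu$, that the set of $\underline{i}$ whose $\sigma_\phi$-orbit from $(\underline{i},e)$ is dense has full $\mu$-measure, and then running the argument from a $\mu$-typical dense-orbit point), but as written that step is a gap rather than a routine detail.
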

\begin{proof}
This directly follows from \cite[Corollary 4.5]{Parry97}.
\end{proof}
Here we give a sufficient assumption (equivalent to the {\em dense rotations} condition when the conformal functions are similarities) to achieve topological transitivity, and therefore (A2).
\begin{lemma}
\label{lemma:doimpltt}
Assume that for some $\underline{i}\in\Lambda^{\mathbb{N}}$,
\begin{equation}
\label{eqn:denseorb}
\{O_{\underline{i}|_n}(x_{\sigma^n\underline{i}})\}_{n\ge1}\text{ is dense in } G.
\end{equation}
Then the skew product $\sigma_{\phi}$ is topologically transitive.
\end{lemma}
\begin{proof}
Recall the notation $x_{\underline{i}}=\Phi( \underline{i})$ for $\underline{i}\in \Lambda^\mathbb{N}$. Fix $\underline{i}\in\Lambda^{\mathbb{N}}$ so that $\{O_{\underline{i}|_n}(x_{\sigma^n\underline{i}})\}_{n\ge1}$ is dense in $G$. Recall the definition of the skew product $\sigma_{\phi}$,
\[
\sigma_{\phi}:\Lambda^{\mathbb{N}}\times G \to\Lambda^{\mathbb{N}}\times G,\;\;
(\underline{i},O)\to(\sigma\underline{i},O\cdot O_{\underline{i}|_1}(x_{\sigma\underline{i}})).
\]
 Take $U,V$ open sets in $\Lambda^{\mathbb{N}}\times G$. Then there exist finite words $u,v$ such that $[u]\subset\pi_{\Lambda^{\mathbb{N}}}(U)$, and $[v]\subset\pi_{\Lambda^{\mathbb{N}}}(V)$, where $\pi_{X}$ denotes the projection onto $X$. For $O\in\pi_{G}(V)$, there exists $n\ge1$ such that
\begin{align*}
O\cdot O_{u\underline{i}|_{|u|+n}}(x_{\sigma^{|u|+n}(u\underline{i})})=&O\cdot O_{u}(x_{i_1i_2\cdots})\cdot O_{i_1\cdots i_n}(x_{i_{n+1}i_{n+2}\cdots})\\
=&O\cdot O_{u}(x_{\underline{i}})\cdot O_{\underline{i}|_{n}}(x_{\sigma^{n}\underline{i}})\in\pi_{G}(U).
\end{align*}
This follows from the fact that $O$, $O_{u}(x_{\underline{i}})$ are fixed and the orbit of $O_{\underline{i}|_n}(x_{\sigma^n\underline{i}})$ is dense. Now consider an infinite word
\[\underline{k}=u\underline{i}|_nv...,
\]
where the symbols following $v$ are arbitrary. Then $(\underline{k},O\cdot O_{u\underline{i}|_{|u|+n}}(x_{\sigma^{|u|+n}(u\underline{i})}))\in U$, and
\[\sigma_\phi^{|u|+n}(\underline{k},O\cdot O_{u\underline{i}|_{|u|+n}}(x_{\sigma^{|u|+n}(u\underline{i})}))=(v...,O)\in V.
\]
\end{proof}

In particular, if there exists a finite word $u$ such that $O_u(x_{\overline{u}})$ is an irrational rotation, where $\overline{u}=uuu\cdots$ denotes the periodic infinite word of $u$, then \eqref{eqn:denseorb} is true.

\subsection{Dimension and entropy}\label{de}
Let $g:Y\to Z$ be a continuous mapping between two metric spaces $Y$ and $Z$. For a Borel measure $\nu$ on $Y$, write
\[
g\nu=\nu\circ g^{-1},
\]
for the \emph{pull-back measure} of $\nu$ on $Z$ through $\varphi$. For a measure $\nu$ and $x\in\text{supp}(\nu)$, let
\[
D_{\nu}(x)=\lim_{r\to0}\frac{\log\nu(B(x,r))}{\log r},
\]
whenever the limit exists, where $B(x,r)$ is the closed ball of centre $x$ and radius $r$. If for some $\alpha\ge0,$ we have $D_{\nu}(x)=\alpha$ for $\nu$-a.e. $x$, we say that $\nu$ is \emph{exact dimensional.}

For $0<r<1$ and $\nu$, a probability measure supported by a compact subset $A$ of $\mathbb{R}^2$, let
\[
H_r(\nu)=-\int_{A}\log\nu(B(x,r))\nu(dx)
\]
be the \emph{r-scaling entropy} of $\nu$. Note that, writing $\mathcal{M}$ for the probability measures supported by $A$, the map $H_r:\mathcal{M}\to\mathbb{R}\cup\{\infty\}$ need not be continuous in the weak-$\star$ topology. However, $H_r$ is lower semicontinuous as it may be expressed as the limit of an increasing sequence of continuous functions of the form $\nu\to\int\max\{k,\log(1/\int f_k(x-y)\nu(dy)\nu(dx))\},$ where $f_k$ is a decreasing sequence of continuous functions approximating $\chi_{B(0,r)}.$ The \emph{lower entropy dimension} of $\nu$ is defined as
\[
\dim_e\nu=\liminf_{r\to0}\frac{H_r(\nu)}{-\log r},
\]
and the \emph{Hausdorff dimension} of $\nu$ is $\dim_H\nu=\inf\{\dim_HA:\nu(A)>0\}.$ Then
\[
\dim_H\nu\le\dim_e\nu,
\]
with equality when $\nu$ is exact dimensional, for details see \cite{FLR}. From \cite{FengHu} we have that the self-conformal measure $\Phi\mu$ is exact dimensional.
\section{Dimension of projections}
\label{sec:dimofprojs}
Let $B=B(0,R)$ be the closed ball of radius $R$, where $R=\max\{|x|:x\in V\}$. Denote by $\mathcal{M}$ the family of probability measures on $B$ and let $\mathcal{B}_\star$ be its weak-$\star$ topology. Denote by $C(\mathcal{M})$ the family of all continuous functions on $\mathcal{M}$. We use the separability of $C(\mathcal{M})$ in $\|\cdot\|_{\infty}$ to obtain convergence of ergodic averages for all $h\in C(\mathcal{M})$.
%
%

\begin{proposition}
\label{prop:ergcge}
We have that for $\xi$-a.e. $O\in G$ and $\mu$-a.e. $\underline{i}\in \Lambda^\mathbb{N}$,
\[
\lim_{N\to\infty}\frac{1}{N}\sum_{n=0}^{N-1}h(O\cdot O_{\underline{i}|_{n}}(\Phi(\sigma^{n} \underline{i}))\Phi\mu)=\mathbb{E}_{\mu\times\xi}(h(O\Phi\mu))
\]
for all $h\in C(\mathcal{M})$.
\end{proposition}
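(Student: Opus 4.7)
The plan is to recognize the sum as a Birkhoff ergodic average for the skew product $\sigma_\phi$ on $\Lambda^\mathbb{N}\times G$, and then use the separability of $C(\mathcal{M})$ to upgrade the almost-sure set from depending on $h$ to being uniform over all $h$.

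First I would unfold the iterated cocycle. Using the factorization of $O_i(x)$ from \eqref{eqn:taylor} together with the identity $\Phi(\sigma^k\underline{i})=f_{i_{k+1}\cdots i_n}(\Phi(\sigma^n\underline{i}))$ (which follows inductively from $\Phi(\underline{i})=f_{i_1}(\Phi(\sigma\underline{i}))$), one checks that
\[
O_{\underline{i}|_n}(\Phi(\sigma^n\underline{i}))=\phi(\underline{i})\,\phi(\sigma\underline{i})\cdots \phi(\sigma^{n-1}\underline{i}),
\]
and hence
\[
\sigma_\phi^n(\underline{i},O)=(\sigma^n\underline{i},\,O\cdot O_{\underline{i}|_n}(\Phi(\sigma^n\underline{i}))).
\]
Thus the quantities appearing in the ergodic sum are precisely the $G$-coordinates of the orbit of $(\underline{i},O)$ under $\sigma_\phi$.

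Next, for each fixed $h\in C(\mathcal{M})$ define $H_h:\Lambda^\mathbb{N}\times G\to\mathbb{R}$ by $H_h(\underline{i},O)=h(O\Phi\mu)$. Because $\Phi\mu$ is supported on the compact set $K\subset B$, the map $G\ni O\mapsto O\Phi\mu\in\mathcal{M}$ is weak-$\star$ continuous (for any $f\in C(B)$, $\int f\,d(O\Phi\mu)=\int f(Ox)\,d\Phi\mu(x)$ depends continuously on $O$ by dominated convergence), so $H_h$ is continuous, bounded, and in particular $(\mu\times\xi)$-integrable. Applying the Birkhoff ergodic theorem to $H_h$ with respect to the ergodic system $(\Lambda^\mathbb{N}\times G,\mathcal{B}\otimes\mathcal{B}_G,\mu\times\xi,\sigma_\phi)$ yields, for $(\mu\times\xi)$-a.e. $(\underline{i},O)$,
\[
\frac{1}{N}\sum_{n=0}^{N-1}H_h(\sigma_\phi^n(\underline{i},O))\longrightarrow \int H_h\,d(\mu\times\xi)=\mathbb{E}_{\mu\times\xi}(h(O\Phi\mu)),
\]
which, via the identification above, is the desired conclusion for this particular $h$. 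By Fubini, there exists a full $\xi$-measure set of $O$ for which the statement holds for $\mu$-a.e. $\underline{i}$.

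Finally, to pass from a single $h$ to all $h$ simultaneously, I would exploit the fact that $B$ is compact and hence $\mathcal{M}$ is compact metrizable in the weak-$\star$ topology, so $C(\mathcal{M})$ is separable under $\|\cdot\|_\infty$. Choosing a countable dense subset $\{h_j\}_{j\ge 1}$, the intersection $\Omega$ of the corresponding full-measure sets still has full $(\mu\times\xi)$-measure. For any $h\in C(\mathcal{M})$, any $\epsilon>0$, and $(\underline{i},O)\in\Omega$, pick $h_j$ with $\|h-h_j\|_\infty<\epsilon$; then
\[
\left|\frac{1}{N}\sum_{n=0}^{N-1}h(O\cdot O_{\underline{i}|_n}(\Phi(\sigma^n\underline{i}))\Phi\mu)-\mathbb{E}_{\mu\times\xi}(h(O\Phi\mu))\right|\le 2\epsilon + o(1)
\]
as $N\to\infty$ by applying the already-established convergence for $h_j$, giving the claim uniformly in $h$. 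The only real subtlety is precisely this uniformity step, and it is dispatched by separability; the rest is an exercise in unwinding the skew-product dynamics and invoking Birkhoff.
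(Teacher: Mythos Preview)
Your proof is correct and follows essentially the same approach as the paper: identify the sum as a Birkhoff average for the ergodic skew product $\sigma_\phi$ via the map $(\underline{i},O)\mapsto O\Phi\mu$, then use separability of $C(\mathcal{M})$ under $\|\cdot\|_\infty$ to upgrade the a.e.\ convergence from a countable dense family to all $h\in C(\mathcal{M})$. The paper's argument is slightly terser (it does not spell out the cocycle unfolding or the Fubini step), but the structure and key ideas are identical.
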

\begin{proof}
Let $\{h_k\}_{k\ge1}$ be a countable dense sequence in $C(\mathcal{M})$. If we write
\[
M: \Lambda^\mathbb{N}\times G\ni(\underline{i},O)\to O\Phi\mu\in\mathcal{M},
\]
then it is easy to verify that for $n\ge0$,
\[
M\circ\sigma^{n}_{\phi}(\underline{i},O)=O\cdot O_{\underline{i}|_{n}}(\Phi(\sigma^{n} \underline{i}))\Phi\mu.
\]
Since we know that $(\Lambda^\mathbb{N}\times G,\mathcal{B}\otimes\mathcal{B}_G,\mu\times\xi,\sigma_{\phi})$ is ergodic, we have that for $\xi$-a.e. $O$ and $\mu$-a.e. $\underline{i}$,
\[
\lim_{N\to\infty}\frac{1}{N}\sum_{n=1}^{N}h_k(O\cdot O_{\underline{i}|_{n}}(\Phi(\sigma^{n} \underline{i}))\Phi\mu)=\mathbb{E}_{\mu\times\xi}(h_k(O\Phi\mu)),
\]
for all $k\ge1$. For any $h\in C(\mathcal{M})$, take a subsequence $\{h_k'\}_{k\ge1}$ of $\{h_k\}_{k\ge1}$ that converges to $h$. On the one hand, since $\mathcal{M}$ is compact, $h$ is bounded, so by the uniform convergence in $\|\cdot\|_{\infty}$,
\[
\lim_{k\to\infty}\mathbb{E}_{\mu\times\xi}(h_k'(O\Phi\mu))=\mathbb{E}_{\mu\times\xi}(h(O\Phi\mu)).
\]
On the other hand, for each $N$,
\[
\left|\frac{1}{N}\sum_{n=0}^{N-1}h_k'(O\cdot O_{\underline{i}|_{n}}(\Phi(\sigma^{n} \underline{i}))\Phi\mu)-\frac{1}{N}\sum_{n=0}^{N-1}h(O\cdot O_{\underline{i}|_{n}}(\Phi(\sigma^{n} \underline{i}))\Phi\mu)\right|\le\|h_k'-h\|_{\infty}.
\]
Thus the limit
\[
\lim_{N\to\infty}\frac{1}{N}\sum_{n=0}^{N-1}h(O\cdot O_{\underline{i}|_{n}}(\Phi(\sigma^{n} \underline{i}))\Phi\mu)
\]
exists and equals $\lim_{k\to\infty}\mathbb{E}_{\mu\times\xi}(h_k'(O\Phi\mu))=\mathbb{E}_{\mu\times\xi}(h(O\Phi\mu)),$ for $\xi$-a.e. $O\in G$ and $\mu$-a.e. $\underline{i}\in \Lambda^\mathbb{N}$.
\end{proof}

\subsection{Lower bound for the dimension of projections}
\label{subsec:lowerbound}

First we shall prove the following lemma regarding distortions of conformal maps under projections.
\begin{lemma}\label{lemma:projdis}
There exists a constant $C_3$ such that for all $\pi\in\Pi_{d,k}$, $n\ge 1$, $i\in \Lambda^n$ and $x,y,z\in V$ one has
\begin{align*}
C_1^{-1}\overline{r}_{i} |\pi O_{i}(z)(x-y)| - C_3\overline{r}_{i}& (|z-x|+|z-y|)|x-y|\le |\pi f_{i}(x)-\pi f_{i}(y)|\\
\le&\ \overline{r}_{i} |\pi O_{i}(z)(x-y)| + C_3\overline{r}_{i}(|z-x|+|z-y|)|x-y|,
\end{align*}
where $C_1$ is as in (\ref{eqn:bp}).
\end{lemma}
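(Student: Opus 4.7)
My plan is to obtain the desired two-sided estimate from a first-order Taylor expansion of $f_i$ along the segment from $y$ to $x$, together with a uniform bound on how fast the derivative $f_i'$ can vary across $V$. First, since $U$ is convex by (A0), the segment $w_t=y+t(x-y)$, $t\in[0,1]$, lies in $U$ for all $x,y\in V$, and so the fundamental theorem of calculus gives
\[
f_i(x)-f_i(y)=\int_0^1 f_i'(w_t)(x-y)\,dt.
\]
Writing $f_i'(w)=r_i(w)O_i(w)$, I will insert $f_i'(z)=r_i(z)O_i(z)$ into the integrand and rewrite this as
\[
f_i(x)-f_i(y)=r_i(z)O_i(z)(x-y)+\int_0^1\bigl[f_i'(w_t)-f_i'(z)\bigr](x-y)\,dt.
\]

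The core step is to prove a uniform estimate of the form $\|f_i'(w)-f_i'(z)\|\le C\,\overline{r}_i\,|w-z|$ for all $w,z\in V$ and all $i\in\Lambda^*$, with $C$ independent of $n=|i|$. I would do this via the decomposition
\[
f_i'(w)-f_i'(z)=(r_i(w)-r_i(z))O_i(w)+r_i(z)(O_i(w)-O_i(z))
\]
and control each piece separately. For the first piece, I refine the argument leading to (\ref{eqn:rij}) by replacing the crude bound $|f_{i_{k+1}\cdots i_n}(w)-f_{i_{k+1}\cdots i_n}(z)|\le\widetilde{C}_1'(r^*)^{n-k}$ by the sharper $|f_{i_{k+1}\cdots i_n}(w)-f_{i_{k+1}\cdots i_n}(z)|\le C_2(r^*)^{n-k}|w-z|$ supplied by (\ref{eqn:diam}); the geometric series still sums and yields $|\log r_i(w)-\log r_i(z)|\le\widetilde{K}|w-z|$ with $\widetilde{K}$ independent of $i$, hence $|r_i(w)-r_i(z)|\lesssim\overline{r}_i\,|w-z|$. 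For the second piece, the analogous telescoping applied to the product formula (\ref{eqn:taylor}) for $O_i(w)$, together with Lipschitz regularity of each $O_{i_j}$ on $V$ and the same combination of (\ref{eqn:diam}) and (A1), gives $\|O_i(w)-O_i(z)\|\lesssim|w-z|$ uniformly in $i$; combined with $r_i(z)\le\overline{r}_i$ this controls the second piece and completes the estimate.

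To conclude, I apply the orthogonal projection $\pi$ (operator norm $\le 1$) to the decomposition above and use $|w_t-z|\le(1-t)|y-z|+t|x-z|$ to compute $\int_0^1|w_t-z|\,dt\le\tfrac12(|x-z|+|y-z|)$, producing
\[
\bigl|\pi f_i(x)-\pi f_i(y)-r_i(z)\pi O_i(z)(x-y)\bigr|\le\tfrac{C}{2}\,\overline{r}_i\,(|x-z|+|y-z|)\,|x-y|.
\]
Setting $C_3=C/2$, the upper bound then follows from the triangle inequality together with $r_i(z)\le\overline{r}_i$, and the lower bound from the reverse triangle inequality together with $r_i(z)\ge C_1^{-1}\overline{r}_i$, which is an immediate consequence of (\ref{eqn:bp}). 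The main obstacle is the uniformity in $i$ of the Lipschitz estimates on $r_i$ and $O_i$: a priori the Lipschitz constants of these products of $n$ factors could blow up with $n$, but the uniform contraction in (A1) forces the resulting telescoping series to converge geometrically, so the constants depend only on $\widetilde{C}_1$, $C_2$, and $r^*$.
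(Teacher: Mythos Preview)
Your proof is correct and follows the same underlying idea as the paper's---linearize $f_i$ along the segment from $y$ to $x$ and control the variation of $f_i'$ between the segment and the reference point $z$ using a Lipschitz bound that is uniform in the word $i$. The difference is only in packaging: the paper rotates $\pi$ to a coordinate projection and applies Rolle's theorem componentwise, obtaining for each coordinate a mean-value point $z^j_{x,y}$ on the segment, then appeals to ``smoothness of $O_i$'' to pass from $O_i(z^j_{x,y})$ to $O_i(z)$; you instead use the integral form of the mean-value theorem and work directly with the full vector identity. Your route avoids the coordinate decomposition and, more importantly, makes explicit the point the paper leaves implicit: that the Lipschitz constants of $r_i$ and $O_i$ on $V$ are bounded \emph{uniformly in $i$}, via the telescoping product together with the contraction bound from (A1) and \eqref{eqn:diam}. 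Both approaches yield the same estimate with the same dependence of $C_3$ on $C_1$, $C_2$, $r^*$, and the $C^1$ data of the individual maps on $\overline{V}$.
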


\begin{proof}
For $\pi\in \Pi_{d,k}$ one can find a rotation $O_\pi\in SO(d,\mathbb{R})$ such that
\[
O_\pi\pi(z)=(P_j(O_\pi z))_{1\le j\le k},
\]
for all $z\in \R^d$, where $P_j:\R^d\to\R$ is the coordinate function $P_j(x_1,...,x_d)=x_j$. We will view a function $f:\R^d\to\R^d$ as $f(x_1,...,x_d)=(f^1(x),...,f^d(x))$. We fix $1\le j\le k$ and consider $f^j:\R^d\to\R$. For $i\in \Lambda^*$ and $x,y\in V$ we have
\[
P_j(O_\pi\pi(f_i(x)-f_i(y)))=P_j(O_\pi f_i(x)-O_\pi f_i(y)).
\]
For $t\in [0,1]$ define
\[
g(t)=P_j(t(O_\pi f_i(x)-O_\pi f_i(y))-O_\pi f_i(y+t(x-y))).
\]
Since $g(0)=g(1)=-P_j(O_\pi f_i(y))$, by Rolle's theorem there exists $t\in (0,1)$ such that $g'(t)=0$, which means that
\[
P_j(O_\pi f_i(x)-O_\pi f_i(y))=P_j(O_\pi f_i'(z^j_{x,y})(x-y)),
\]
where $z^j_{x,y}=y+t(x-y)$ is some point lying in the line segment between $x$ and $y$. Writing $f_i'(z^j_{x,y})=r_i(z^j_{x,y})O_i(z^j_{x,y})$, for $z\in V$ we have
\begin{align*}
&P_j(O_\pi\pi(f_i(x)-f_i(y)))\\
=&P_j(O_\pi f_i(x)-O_\pi f_i(y))\\
=&r_i(z^j_{x,y})P_j(O_\pi O_i(z^j_{x,y})(x-y))\\
=& r_i(z^j_{x,y})P_j(O_\pi O_i(z)(x-y))+r_i(z^j_{x,y})P_j(O_\pi (O_i(z^j_{x,y})-O_i(z))(x-y))\\
=& r_i(z^j_{x,y})P_j(\pi(O_i(z)(x-y)))+r_i(z^j_{x,y})P_j(\pi((O_i(z^j_{x,y})-O_i(z))(x-y))).
\end{align*}
This holds for all $1\le j\le k$, and so
\begin{align*}
(O_\pi&\pi(f_i(x)-f_i(y)))\\
=&(r_i(z^1_{x,y})P_1(\pi(O_i(z)(x-y)))+r_i(z^1_{x,y})P_1(\pi((O_i(z^1_{x,y})-O_i(z))(x-y))),\\
&...,r_i(z^k_{x,y})P_k(\pi(O_i(z)(x-y)))+r_i(z^k_{x,y})P_k(\pi((O_i(z^k_{x,y})-O_i(z))(x-y)))).
\end{align*}
By the smoothness of $O_i$ and \eqref{eqn:bp}, as well as the fact that $O_\pi$ is isometric, one can find a constant $\widetilde{C}_3$ such that
\[
|O_{\pi}\pi(f_i(x)-f_i(y))|\le\overline{r}_i|\pi(O_i(z)(x-y))|+\widetilde{C}_3\overline{r}_i\max_{1\le j \le k}|z_{x,y}^j-z||x-y|
\]
as well as
\[
C_1^{-1}\overline{r}_i |\pi O_i(z)(x-y)|\le |\pi f_i(x)-\pi f_i(y)| +\widetilde{C}_3\overline{r}_i \max_{1\le j \le k}|z_{x,y}^j-z||x-y|
\]
Finally note that $z_{x,y}$ lies in the line segment between $x$ and $y$, therefore
\[
\max_{1\le j \le k}|z_{x,y}^j-z|\le |z-x|+|z-y|.
\]
\end{proof}

Let $\rho=\max\{r_i(x):i\in\Lambda,x\in K\}$. By (A1) we have $\rho<1$. Recall that  for $i\in\Lambda^*$ we denote by $\overline{r}_i=\sup\{r_{i}(x):x\in V\}$. For each $q\ge1$ we redefine the alphabet used for symbolic space to obtain one in which the contraction ratios do not vary too much:
\begin{equation}
\label{eqn:redefsymsp}
\Lambda_q=\{i\in\Lambda^*: \overline{r}_i\le\rho^q<\overline{r}_{i^-}\},
\end{equation}
where $i^-=i_1i_2\cdot\cdot\cdot i_{n-1}$. Define $\underline{r}=\inf\{r_i(x):i\in\Lambda\;,\; x\in V\}$. Since $f_i$ are conformal and $\overline{V}\subset U$ is compact, we have $\underline{r}>0$. By definition, for $i\in \Lambda_q$ one has
\begin{equation}
\label{eqn:ribound}
\underline{r}\rho^q< \overline{r}_i\le \rho^{|i|} \text{ and } \underline{r}^{|i|}\le \overline{r}_i\le \rho^q.
\end{equation}
This implies that for $i\in \Lambda_q$,
\begin{equation}
\label{eqn:nq}
q\frac{\log \rho}{\log \underline{r}}\le |i| \le q+\frac{\log \underline{r}}{\log \rho}.
\end{equation}

We shall use the same notation $\sigma:\Lambda^\mathbb{N}_q\to \Lambda^\mathbb{N}_q$ to denote the left-shift operator according to $\Lambda_q$. Let $\mathcal{I}_q=\{f_i\}_{i\in\Lambda_q}$ be the conformal IFS over $\Lambda_q$. By \eqref{eqn:diam} we can deduce that the canonical mapping $\Phi_q:(\Lambda_q^{\mathbb{N}},d_{\rho^q})\to K$ is $C_2\cdot R$-Lipschitz, where $R=\mathrm{diam}(V)$. Indeed, for $\underline{i},\underline{j}\in\Lambda_q^{\mathbb{N}}$ with $d_{\rho^q}(\underline{i},\underline{j})=(\rho^q)^n$ so that $\underline{i}|_n=\underline{j}|_n$, one has
\begin{align*}
|\Phi_q(\underline{i})-\Phi_q(\underline{j})|&\le C_2\cdot \overline{r}_{i_1\cdot\cdot\cdot i_n}\cdot |x_{\sigma^n\underline{i}}-x_{\sigma^n\underline{j}}|\\
&\le C_2R\cdot \overline{r}_{i_1\cdots i_n}\\
&\le C_2R\cdot(\rho^q)^n\\
&= C_2R\cdot d_{\rho^q}(\underline{i},\underline{j}).
\end{align*}

We consider the Gibbs measure $\mu_q$ on $\Lambda_q^{\mathbb{N}}$ with respect to the potential $\varphi$. Observe that it is the same Gibbs measure as $\mu$ on embedding $\Lambda_q^{\mathbb{N}}$ into $\Lambda^{\mathbb{N}}$. To show that the compact group extension  $(\Lambda_q^\mathbb{N}\times G,\mathcal{B}\otimes\mathcal{B}_G,\mu_q\times\xi,\sigma_{\phi_q})$ is also ergodic, where
\[
\phi_q(\underline{i})=O_{i_1}(\Phi(\sigma \underline{i})) \text{ for } \underline{i}=i_1i_2\cdots\in \Lambda_q^\mathbb{N}
\]
and $\sigma_{\phi_q}$ is the skew product of $\phi_q$ with respect to the left shift $\sigma$ on $\Lambda_q$, we need the following lemma.

\begin{lemma}\label{lemma:ergodicq}
Under (A2), $\sigma_{\phi_q}$ has a dense orbit in $\Lambda^\mathbb{N}_q\times G$ for each $q\ge 1$.
\end{lemma}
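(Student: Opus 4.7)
My plan is to adapt the argument of Lemma~\ref{lemma:doimpltt} to the alphabet $\Lambda_q$. An inspection of that lemma's proof shows it uses nothing about $\Lambda$ specifically, only general properties of the shift/skew-product structure. Consequently, its conclusion also holds with $\Lambda_q,\sigma_q,\phi_q$ replacing $\Lambda,\sigma,\phi$: it suffices to exhibit some $\underline{j}^\ast\in\Lambda_q^{\mathbb{N}}$ for which the ``aligned'' rotation sequence
\[
\bigl\{O_{J_1\cdots J_n}(x_{\sigma_q^n\underline{j}^\ast})\bigr\}_{n\ge 1}
\]
is dense in $G$, where $J_1,J_2,\ldots\in\Lambda_q$ is the $\Lambda_q$-decomposition of $\underline{j}^\ast$. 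From this the $\Lambda_q$-analogue of Lemma~\ref{lemma:doimpltt} yields topological transitivity of $\sigma_{\phi_q}$, and hence a dense orbit in $\Lambda_q^{\mathbb{N}}\times G$.

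To construct such a $\underline{j}^\ast$, I would work inductively against a fixed countable dense sequence $\{g_k\}_{k\ge 1}\subset G$ together with an enumeration of $\Lambda_q$-cylinders. At stage $k$, with a $\Lambda_q$-prefix $J_1,\ldots,J_{N_{k-1}}$ already chosen, I append further $\Lambda_q$-words so that at some new time $N_k>N_{k-1}$ the cumulative rotation lies within $1/k$ of $g_k$ and simultaneously the next few $\Lambda_q$-letters spell out the $k$-th enumerated cylinder. Applying the composition formula \eqref{eqn:taylor} and the bounded distortion \eqref{eqn:bp}, the rotation-matching task at stage $k$ reduces to finding a ``filler'' $\Lambda_q$-word concatenation $W$ with $O_W(y)$ close to a prescribed target in $G$. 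Hence the whole construction succeeds provided the set $A_q:=\{O_W(y):W\text{ a }\Lambda_q\text{-word concatenation},\,y\in K\}$ is dense in $G$.

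The main obstacle, and the hardest step, is to verify that $A_q$ is dense in $G$. Write $S_q=\overline{A_q}$: as a closed sub-semi-group of the compact group $G$ it is automatically a closed subgroup. By (A2) and the converse direction of Lemma~\ref{lemma:doimpltt}, the analogous closure $S$ generated by single-letter rotations $\{O_j(y):j\in\Lambda,\,y\in K\}$ equals $G$. I would then observe that any $i\in\Lambda^\ast$ extends to a $\Lambda_q$-word concatenation $iv$ with $|v|$ bounded by the maximum $\Lambda_q$-word length $N_q$ (from \eqref{eqn:nq}), and the composition formula yields
\[
G=S\;\subseteq\;S_q\cdot\mathcal{K},\qquad \mathcal{K}=\{O_v(y)^{-1}:|v|\le N_q,\,y\in K\},
\]
with $\mathcal{K}$ compact in $G$. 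Finally I would invoke the connectedness of $G=SO(d,\mathbb{R})$ and a coset argument to conclude $S_q=G$. Once $S_q=G$ is in hand the diagonal construction of $\underline{j}^\ast$ is routine, and Lemma~\ref{lemma:ergodicq} follows.
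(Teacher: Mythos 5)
Your reduction of the lemma to ``density of $A_q$'' rests on two steps that are not justified, and both fail at exactly the non-linear difficulty the paper is built around. First, $A_q$ is not a semigroup: by the composition rule \eqref{eqn:taylor}, $O_{WW'}(y)=O_W(f_{W'}(y))\,O_{W'}(y)$, so a product $O_W(y)\,O_{W'}(y')$ of two elements of $A_q$ is again of the form $O_{W''}(y'')$ only when the left factor happens to be evaluated at the push-forward $f_{W'}(y')$ of the right factor's base point. Since the base points cannot be matched freely (the maps $x\mapsto O_i(x)$ have uniformly bounded, but not small, oscillation over $K$), neither $A_q$ nor $\overline{A_q}$ is closed under multiplication in any evident way, so the step ``closed sub-semigroup of a compact group, hence a closed subgroup'' has no premise. (For self-similar maps, where the $O_i$ are constant, this is exactly where the argument would become trivial.) Second, even granting that $S_q=\overline{A_q}$ is a closed subgroup, the inclusion $G\subseteq S_q\cdot\mathcal{K}$ with $\mathcal{K}$ merely compact does not force $S_q=G$: a proper closed subgroup times a compact set can cover $G$ (take $S_q$ trivial and $\mathcal{K}=G$), and here $\mathcal{K}$ is a finite union of Lipschitz images of $K$, which is not small in any sense a connectedness or coset argument can exploit; a dimension count does not help either, since $\dim_H K$ can exceed the codimension of the maximal closed subgroups of $SO(d,\mathbb{R})$.

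Independently of this, the inductive construction needs more than density of $A_q$. At stage $k$ the quantity you must steer is $O_{J_1\cdots J_{N_{k-1}}W}(y)=O_{J_1\cdots J_{N_{k-1}}}(f_W(y))\,O_W(y)$, whose left factor is evaluated at a point that moves with the filler $W$ you are trying to choose and with the not-yet-chosen tail $y$. The tail dependence can be handled by committing long pinning blocks (uniform Lipschitz continuity of $x\mapsto O_i(x)$, as in the proof of \eqref{eqn:bp}), but what remains is density of the cumulative rotations \emph{with the already fixed prefix} $J_1\cdots J_{N_{k-1}}$ --- equivalently, joint control of the pair $(f_W(y),O_W(y))$ in $K\times G$ --- and this is essentially the statement being proved, not a consequence of density of the unconstrained set $A_q$. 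Note that the paper avoids all of this: it never constructs a point satisfying the $\Lambda_q$-analogue of hypothesis \eqref{eqn:denseorb}. Instead it takes the dense $\sigma_\phi$-orbit given by (A2), writes it as the union, over the finitely many words $j$ with $\overline{r}_j\ge\rho^q$, of translated $\sigma_{\phi_q}$-orbits, applies Baire's theorem to find a $\sigma_{\phi_q}$-orbit closure with nonempty interior $[u]\times I$, and then uses invariance of that closure together with the fact that a compact connected Lie group is generated by any neighbourhood of its identity. To complete your route you would have to prove the prefix-constrained density (or the subgroup property of $\overline{A_q}$) by some such argument; as written, the two key claims are unsupported and the proposal does not yield the lemma.
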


\begin{proof}
Let $\underline{i}=i_1i_2\cdots\in \Lambda^\mathbb{N}$ and $O\in G$ be such that $\{\sigma_\phi^n(\underline{i},O):n\ge 0\}$ is dense in $\Lambda^\mathbb{N}\times G$. For $q\ge 1$ denote by $\Lambda_{<q}=\{j\in \Lambda^*: \overline{r}_i\ge \rho^q\}$. Then $\bigcup_{j\in \Lambda_{<q}} \{T_j\sigma_{\phi_q}^n(\underline{i},O):n\ge 0\}=\{\sigma_\phi^n(\underline{i},O):n\ge 0\}$ is dense in $\Lambda^\mathbb{N}\times G$, where for $j\in \Lambda_{<q}$, $T_j\sigma_{\phi_q}^n(\underline{i},O)$ is the unique element in $\Lambda^\mathbb{N}\times G$ such that $\sigma_\phi^{|j|}(T_j\sigma_{\phi_q}^n(\underline{i},O))=\sigma_{\phi_q}^n(\underline{i},O)$. Since $\Lambda_{<q}$ is finite, by Baire's category theorem, there exists a $j\in \Lambda_{<q}$ such that $H_j=\overline{\{T_j\sigma_{\phi_q}^n(\underline{i},O):n\ge 0\}}$ has non-empty interior in $\Lambda^\mathbb{N}\times G$, that is it contains a set $H=[u]\times I$, where $u\in \Lambda^*$ is finite word and $I\subset G$ has non-empty interior. Take an element $g$ in the interior of $I$, then $\sigma^{|u|}Hg^{-1}=\{(\sigma^{|u|}\underline{k},g^{-1}h):(\underline{k},h)\in H\}$ contains the full set $\Lambda^\mathbb{N}$ times a set $V'$ containing a neighbourhood of the identity in $G$. Since a closed group generated by a set of elements coincides with the closed semigroup generated by them and a compact connected Lie group is generated by any neighbourhood of its identity, we have $\sigma^{|u|}Hg^{-1}=\Lambda^\mathbb{N}\times G$.
\end{proof}

Let $n_q=\min\{|i|:i\in\Lambda_q\}$. By \eqref{eqn:nq} one has $n_q\to \infty$ as $q\to\infty$. By \eqref{eqn:quasibern} we have for $i,j\in \Lambda_q^*$,
\begin{equation}\label{eqn:quasibern'}
c_q^{-1}\mu_q([i])\mu_q([j])\le\mu_q([ij])\le c_q\mu_q([i])\mu_q([j]),
\end{equation}
where $c_q=e^{\kappa\frac{\beta^{n_q}}{1-\beta}+\kappa \beta^{n_q}}$. This is due to the fact that for $i,j\in \Lambda_q^*$, if either $i$ or $j$ is the empty word $\emptyset$ then we have $\mu_q([ij])= \mu_q([i])\mu_q([j])$ and if both $i,j\neq \emptyset$ then $\min\{|i|,|j|\}|\ge n_q$. Since $n_q\to \infty$ as $q\to \infty$, one has $c_q\to 1$ as $q\to \infty$. For a measure $\nu$ and a measurable set $B$ with $\nu(B)>0$ denote by
\[
\nu_B=\frac{1}{\nu(B)}\nu|_B.
\]
In particular for $\nu=\mu_q$ we shall use the notation $\mu_{q,B}:=(\mu_q)_B$. By \eqref{eqn:quasibern'} we have for $n\ge 1$ and $i\in\Lambda_q^n$ that
\begin{equation}\label{eqn:gibcon}
c_q^{-1} (\sigma^n\mu_{q})_{[i]} \le  \mu_q\le c_q(\sigma^n\mu_{q})_{[i]}
\end{equation}
since for any $j\in\Lambda^*$,
\begin{align*}
(\sigma^n\mu_{q})_{[i]}([j])&=\frac{1}{\sigma^n\mu_{q}([i])} (\sigma^n\mu_{q})|_{[i]}([j])\\
&=\frac{1}{\mu_{q}([i])} \sum_{k_1\cdots k_n\in\Lambda_q^n}\mu_{q}([k_1\cdots k_n j]\cap [i])\\
&=\frac{1}{\mu_{q}([i])} \mu_q([ij]).
\end{align*}

As before, $\Pi_{d,k}$ is the set of orthogonal projections from $\mathbb{R}^d$ onto its $k$-dimensional subspaces, and $G=SO(d,\mathbb{R})$ is the rotation group. We shall need the following Lemma.
\begin{lemma}\label{lemma:zoom}
For all $q\ge 1$, $\pi\in \Pi_{d,k}$, $O\in G$, $\underline{i}\in \Lambda_q^\mathbb{N}$ and $n\ge 1$,
\[
H_{C_1^2\rho^{q}}(\pi O\cdot O_{\underline{i}|_{n}}(x_{\sigma^{n}\underline{i}})\Phi_q \mu_q)\le \log c_q+c_qH_{(\underline{r}\rho^q)^{n+1}}(\pi O\Phi_q \mu_{q,[\underline{i}|_{n}]}).
\]
\end{lemma}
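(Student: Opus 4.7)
The plan rests on the identity $\Phi_q \mu_{q,[\underline{i}|_n]} = f_{\underline{i}|_n}(\Phi_q \nu)$, where $\nu := (\sigma^n \mu_q)_{[\underline{i}|_n]}$, which follows from $\Phi_q(\underline{i}|_n \cdot \underline{k}) = f_{\underline{i}|_n}(\Phi_q(\underline{k}))$ together with the observation $\sigma^n \mu_{q,[\underline{i}|_n]} = (\sigma^n \mu_q)_{[\underline{i}|_n]}$ already recorded before \eqref{eqn:gibcon}. Writing $O_n := O_{\underline{i}|_n}(x_{\sigma^n\underline{i}})$, the RHS measure is thus $\pi O f_{\underline{i}|_n} \Phi_q \nu$, while the LHS measure is $\pi O \cdot O_n \Phi_q \mu_q$. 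I would attack the problem in two stages: a change of measure from $\mu_q$ to $\nu$ (using the Gibbs bound \eqref{eqn:gibcon}), and a scale transfer via the linearization of $f_{\underline{i}|_n}$ at $x_{\sigma^n\underline{i}}$ provided by Lemma \ref{lemma:projdis}.

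For the first stage, the inequality $c_q^{-1} \nu \le \mu_q \le c_q \nu$ is preserved under pushforward, giving $c_q^{-1} \tilde\mu \le \mu'' \le c_q \tilde\mu$ where $\mu'' := \pi O O_n \Phi_q \mu_q$ and $\tilde\mu := \pi O O_n \Phi_q \nu$. The standard computation using the density bound, together with the non-negativity of $-\log \tilde\mu(B(\cdot,r))$, then yields
\[
H_r(\mu'') \le \log c_q + c_q H_r(\tilde\mu)
\]
for every $r>0$, which I apply at $r = C_1^2 \rho^q$.

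For the second stage, I need $H_{C_1^2 \rho^q}(\tilde\mu) \le H_{(\underline{r}\rho^q)^{n+1}}(\pi O f_{\underline{i}|_n} \Phi_q \nu)$. Since both sides are scaling entropies of pushforwards of the \emph{same} measure $\nu$, the comparison reduces to a pointwise inclusion of level sets: for each $\underline{k} \in \Lambda_q^{\mathbb{N}}$,
\[
\{\underline{l} : |\pi O f_{\underline{i}|_n}(\Phi_q(\underline{l})) - \pi O f_{\underline{i}|_n}(\Phi_q(\underline{k}))| \le (\underline{r}\rho^q)^{n+1}\} \subseteq \{\underline{l}: |\pi O O_n(\Phi_q(\underline{l}) - \Phi_q(\underline{k}))| \le C_1^2 \rho^q\}.
\]
This inclusion should come directly from the lower bound in Lemma \ref{lemma:projdis} applied with $z = x_{\sigma^n\underline{i}}$ (so $O_{\underline{i}|_n}(z) = O_n$): after rearrangement and dividing by $\overline{r}_{\underline{i}|_n}$, using $r_{\underline{i}|_n}(x_{\sigma^n\underline{i}}) \ge C_1^{-1}\overline{r}_{\underline{i}|_n}$ from \eqref{eqn:bp}, the constraint on the actual projected distance translates into a bound of order $C_1(\underline{r}\rho^q)^{n+1}/\overline{r}_{\underline{i}|_n}$ on $|\pi O O_n(\Phi_q(\underline{l})-\Phi_q(\underline{k}))|$, with an additive contribution coming from the quadratic error $C_3\overline{r}_{\underline{i}|_n}(|z-x|+|z-y|)|x-y|$. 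The main obstacle is precisely this scale calibration: one must verify that, after invoking the iterated lower estimate $\overline{r}_{\underline{i}|_n} \gtrsim (\underline{r}\rho^q)^n$ derived from \eqref{eqn:ribound} for $\Lambda_q^n$-words, the entire right-hand side really fits inside $C_1^2 \rho^q$ once both the distortion factor $C_1$ and the quadratic error have been absorbed. This is why the exponent on the RHS scale is $n+1$ rather than $n$: the extra $\underline{r}\rho^q$ factor provides the necessary slack to swallow both error sources while still matching $C_1^2 \rho^q$ on the LHS. Combining this set inclusion with the bound from the first stage yields the lemma.
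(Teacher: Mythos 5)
Your first stage is fine and is essentially the paper's own first step: the identity $\Phi_q\mu_{q,[\underline{i}|_n]}=f_{\underline{i}|_n}\Phi_q\nu$ with $\nu=\sigma^n\mu_{q,[\underline{i}|_n]}$, together with \eqref{eqn:gibcon} and the nonnegativity of the integrand, reduces the lemma to exactly the pointwise inclusion you state. The gap is in your second stage. Applying the lower bound of Lemma \ref{lemma:projdis} directly at $z=x_{\sigma^{n}\underline{i}}$ to $u=\Phi_q(\underline{l})$, $v=\Phi_q(\underline{k})$ and rearranging gives only
\[
|\pi O\, O_{\underline{i}|_{n}}(z)(u-v)|\;\le\; C_1\,\frac{(\underline{r}\rho^q)^{n+1}}{\overline{r}_{\underline{i}|_{n}}}\;+\;C_1C_3\bigl(|z-u|+|z-v|\bigr)|u-v|,
\]
and here $u,v,z$ are arbitrary points of $K$, at mutual distances comparable to $\mathrm{diam}(V)$, so the second term is bounded only by $2C_1C_3R^2$: a constant which scales with neither $\overline{r}_{\underline{i}|_{n}}$ nor $\rho^{q}$. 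It therefore cannot be absorbed into $C_1^2\rho^{q}$, no matter how the first term is calibrated. In particular your explanation of the exponent $n+1$ is not correct: in the paper that exponent only serves to guarantee $(\underline{r}\rho^q)^{n+1}\le\overline{r}_{\underline{i}|_{n}}\rho^{q}$ via \eqref{eqn:ribound}, i.e.\ to put the radius on the right-hand side below the natural threshold $\overline{r}_{\underline{i}|_{n}}\rho^{q}$ appearing in the comparison; it provides no slack at all against the additive nonlinearity error, which is independent of $n$ and $q$.

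The idea you are missing --- and it is precisely the point the introduction advertises as the main idea of Lemma \ref{lemma:zoom} --- is the linear rescaling family $S_r(x)=r(x-x_{\sigma^{n}\underline{i}})+x_{\sigma^{n}\underline{i}}$. The paper applies Lemma \ref{lemma:projdis} to the contracted points $S_r(x_{\sigma^{n}\underline{j}})$, $S_r(x_{\sigma^{n}\underline{k}})$, which lie within $Rr$ of $z$, so that the error term becomes of order $r^2\overline{r}_{\underline{i}|_{n}}$ while the linearized quantity scales exactly linearly, since $\pi O\,O_{\underline{i}|_{n}}(z)\bigl(S_r(x_{\sigma^{n}\underline{j}})-S_r(x_{\sigma^{n}\underline{k}})\bigr)=r\,\pi O\,O_{\underline{i}|_{n}}(z)\bigl(x_{\sigma^{n}\underline{j}}-x_{\sigma^{n}\underline{k}}\bigr)$; comparing the estimates for the rescaled pair with those for the original pair, dividing by $r$, and letting first $r\to0$ and then $\epsilon\to0$ kills the quadratic error entirely and produces the constant $C_1^2\rho^{q}$. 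Your stage 2 needs to be replaced by this zooming argument (or something playing the same role); a single direct application of Lemma \ref{lemma:projdis} at the fixed reference point does not establish the inclusion you rely on.
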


\begin{proof}
Recall that
\[
H_r(\nu)=\int_{\mathrm{supp}(\nu)} -\log \nu(B(x,r)) \, \nu(dx).
\]
Shortly denote by $g=\pi O\cdot O_{\underline{i}|_{n}}(x_{\sigma^{n}\underline{i}})\Phi_q$. By \eqref{eqn:gibcon} we have
\begin{align}
&H_{C_1^2\rho^{q}}(g \mu_q)\nonumber\\
=& \int_{\Lambda_q^\mathbb{N}} -\log \mu_q(\{\underline{j}\in\Lambda_q^\mathbb{N}:|g(\underline{j})-g(\underline{k})|\le C_1^2\rho^{q}\}) \,\mu_q(d\underline{k})\nonumber\\
\le &\log c_q+c_q\int_{\Lambda_q^\mathbb{N}} -\log (\sigma^{n}\mu_{q})_{[\underline{i}|_{n}]}(\{\underline{j}\in\Lambda_q^\mathbb{N}:|g(\underline{j})-g(\underline{k})|\le C_1^2\rho^{q}\}) \,(\sigma^{n}\mu_{q})_{[\underline{i}|_{n}]}(d\underline{k})\nonumber\\
=&\log c_q+c_q\int_{\Lambda_q^\mathbb{N}} -\log \mu_{q, [\underline{i}|_{n}]}(\{\underline{j}\in\Lambda_q^\mathbb{N}:|g(\sigma^{n}\underline{j})-g(\sigma^{n}\underline{k})|\le C_1^2\rho^{q}\}) \,\mu_{q,[\underline{i}|_{n}]}(d\underline{k}).\label{eqn:cqq}
\end{align}
We claim that for each $\underline{j},\underline{k}\in [\underline{i}|_n]$, if
\[
|\pi O x_{\underline{j}}- \pi Ox_{\underline{k}}| \le \overline{r}_{\underline{i}|_n}\rho^q,
\]
then
\[
|g(\sigma^{n}\underline{j})-g(\sigma^{n}\underline{k})| \le C_1^2 \rho^{q}.
\]
Since, by \eqref{eqn:ribound},
\[
\overline{r}_{\underline{i}|_{n}} \rho^{q}\ge (\underline{r} \rho^q)^{n} \rho^{q}\ge (\underline{r} \rho^q)^{n+1},
\]
this implies that
\[
\{\underline{j}\in\Lambda_q^\mathbb{N}:|\pi Ox_{\underline{j}}-\pi O x_{\underline{k}}|\le (\underline{r}\rho^q)^{n+1}\}\subset \{\underline{j}\in\Lambda_q^\mathbb{N}:|g(\sigma^{n}\underline{j})-g(\sigma^{n}\underline{k})|\le C_1^2\rho^{q}\}.
\]
Then by \eqref{eqn:cqq} we may deduce that
\begin{align*}
&H_{C_1^2\rho^{q}}(g \mu_q)\\
\le & \log c_q+c_q\int_{\Lambda_q^\mathbb{N}} -\log \mu_{q, [\underline{i}|_{n}]}(\{\underline{j}\in\Lambda_q^\mathbb{N}:|\pi Ox_{\underline{j}}-\pi O x_{\underline{k}}|\le (\underline{r}\rho^q)^{n+1}\}) \,\mu_{q,[\underline{i}|_{n}]}(d\underline{k})\\
=& \log c_q+c_qH_{(\underline{r}\rho^q)^{n+1}}(\pi O\Phi_q \mu_{q,[\underline{i}|_{n}]}).
\end{align*}

Now we prove our claim. Fix $\underline{j},\underline{k}\in [\underline{i}|_n]$. For $r>0$ write $S_{r}(x)=r(x-x_{\sigma^{n}\underline{i}})+x_{\sigma^{n}\underline{i}}$ for $x\in V$.  Note that $|S_{r}(x_{\sigma^{n}\underline{j}})-S_{r}(x_{\sigma^{n}\underline{k}})|\le R\cdot r$ as well as
\[
|S_{r}(x_{\sigma^{n}\underline{j}})-x_{\sigma^{n}\underline{i}}| \vee |S_{r}(x_{\sigma^{n}\underline{k}})-x_{\sigma^{n}\underline{i}}|\le R\cdot r,
\]
where we recall that $R=\mathrm{diam}(V)$. First, by Lemma \ref{lemma:projdis} and the fact that $S_r$ only consists of scaling and translation, we have
\begin{align*}
&|\pi O f_{\underline{i}|_{n}}(S_{r}(x_{\sigma^{n}\underline{j}}))- \pi Of_{\underline{i}|_{n}}(S_{r}(x_{\sigma^{n}\underline{k}}))|\\
\le&\overline{r}_{\underline{i}|_n}|\pi O\cdot O_{\underline{i}|_{n}}(x_{\sigma^{n}\underline{i}})(S_{r}(x_{\sigma^{n}\underline{j}})-S_{r}(x_{\sigma^{n}\underline{k}}))|+2C_3R^2r^2 \overline{r}_{\underline{i}|_n}\\
=&r(\overline{r}_{\underline{i}|_n}|\pi O\cdot O_{\underline{i}|_{n}}(x_{\sigma^{n}\underline{i}})(x_{\sigma^{n}\underline{j}}-x_{\sigma^{n}\underline{k}})|+2C_3R^2r \overline{r}_{\underline{i}|_n})\\
\le&r(C_1|\pi O f_{\underline{i}|_{n}}(x_{\sigma^{n}\underline{j}})- \pi Of_{\underline{i}|_{n}}(x_{\sigma^{n}\underline{k}})|+2(C_1+1)C_3R^2r \overline{r}_{\underline{i}|_n}).
\end{align*}
This implies that if
\[
|\pi O x_{\underline{j}}- \pi Ox_{\underline{k}}|=|\pi O f_{\underline{i}|_{n}}(x_{\sigma^{n}\underline{j}})- \pi Of_{\underline{i}|_{n}}(x_{\sigma^{n}\underline{k}})| \le \overline{r}_{\underline{i}|_n}\rho^q,
\]
then for any $\epsilon>0$, for all $r\in (0, \frac{\epsilon\rho^q}{2(C_1+1)C_3R^2})$,
\[
|\pi O f_{\underline{i}|_{n}}(S_{r}(x_{\sigma^{n}\underline{j}}))- \pi Of_{\underline{i}|_{n}}(S_{r}(x_{\sigma^{n}\underline{k}}))|\le (C_1+\epsilon)r\overline{r}_{\underline{i}|_n}\rho^q.
\]
On the other hand, note that
\[
r(g(\sigma^{n}\underline{j})-g(\sigma^{n}\underline{k}))=\pi O\cdot O_{\underline{i}|_{n}}(x_{\sigma^{n}\underline{i}})(S_{r}(x_{\sigma^{n}\underline{j}})-S_{r}(x_{\sigma^{n}\underline{k}})).
\]
Thus, by Lemma \ref{lemma:projdis} again, for all $r\in (0, \frac{\epsilon\rho^q}{2(C_1+1)C_3R^2})$,
\[
r|g(\sigma^{n}\underline{j})-g(\sigma^{n}\underline{k})| \le C_1 ((C_1+\epsilon)r\rho^q+2C_3R^2r^2),
\]
which implies that
\[
|g(\sigma^{n}\underline{j})-g(\sigma^{n}\underline{k})| \le C_1((C_1+\epsilon)\rho^{q}+2C_3R^2r).
\]
This yields that
\[
|g(\sigma^{n}\underline{j})-g(\sigma^{n}\underline{k})| \le C_1(C_1+\epsilon) \rho^{q}
\]
holds for all $\epsilon>0$, therefore
\[
|g(\sigma^{n}\underline{j})-g(\sigma^{n}\underline{k})| \le C_1^2 \rho^{q}.
\]

\end{proof}

For $\pi\in\Pi_{d,k},$ $q\in\mathbb{N}$ and $\nu$ a measure on $\mathbb{R}^d$, define
$$e_q(\pi,\nu)=\frac{1}{-\log C_1^2+q\log(1/\rho)}H_{C_1^2\rho^{q}}(\pi\nu).$$
So $e_q:\Pi_{d,k}\times \mathcal{M}\to [0,k]$ is lower semicontinuous. Define
$$E_{q}(\pi)=\mathbb{E}_{\mu_q\times \xi}(e_q(\pi,O\Phi_q\mu_q)).$$
\begin{proposition}
\label{prop:lowbound}
For all $q\ge 1$, for $\xi$-a.e. $O\in G$ and $\mu_q$-a.e. $\underline{i}\in\Lambda_q^{\mathbb{N}}$,
\begin{align*}
\liminf_{N\to\infty}\frac{1}{N}\sum_{n=1}^NH_{\left(\underline{r}\rho^q\right)^{n+1}}(\pi O \Phi_q\mu_{q,[\underline{i}|_n]})\ge \frac{-\log C_1^2+q\log(1/\rho)}{c_q}\cdot E_q(\pi)-\frac{\log c_q}{c_q},
\end{align*}
for all $\pi \in \Pi_{d,k}$.
\end{proposition}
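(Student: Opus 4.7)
The plan is to combine Lemma \ref{lemma:zoom} with an ergodic-averaging argument for the compact group extension $(\Lambda_q^\mathbb{N}\times G,\mu_q\times\xi,\sigma_{\phi_q})$, which is ergodic by Lemma \ref{lemma:ergodicq} together with the compact group extension theorem. Consequently the proof of Proposition \ref{prop:ergcge} applies verbatim in the $\mu_q$-setting, yielding a single $\mu_q\times\xi$-full-measure set $A\subset\Lambda_q^\mathbb{N}\times G$ on which the ergodic average converges for \emph{every} $h\in C(\mathcal{M})$ simultaneously (this is precisely where the separability of $C(\mathcal{M})$ is used in the cited proof). First, rearranging Lemma \ref{lemma:zoom} gives
\[
H_{(\underline{r}\rho^q)^{n+1}}(\pi O\Phi_q\mu_{q,[\underline{i}|_n]})\ge \frac{1}{c_q}H_{C_1^2\rho^{q}}\bigl(\pi O\cdot O_{\underline{i}|_n}(x_{\sigma^n\underline{i}})\Phi_q\mu_q\bigr)-\frac{\log c_q}{c_q},
\]
and the definition of $e_q$ rewrites the $H_{C_1^2\rho^q}$-term as $(-\log C_1^2+q\log(1/\rho))\,e_q\bigl(\pi,O\cdot O_{\underline{i}|_n}(x_{\sigma^n\underline{i}})\Phi_q\mu_q\bigr)$. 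Averaging over $n=1,\ldots,N$ and taking $\liminf_{N\to\infty}$, the proposition reduces to establishing the ergodic-type lower bound
\[
\liminf_{N\to\infty}\frac{1}{N}\sum_{n=1}^{N} e_q\bigl(\pi,O\cdot O_{\underline{i}|_n}(x_{\sigma^n\underline{i}})\Phi_q\mu_q\bigr)\ge E_q(\pi)
\]
for every $\pi\in\Pi_{d,k}$ and every $(\underline{i},O)\in A$.

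The main obstacle is that $e_q(\pi,\cdot)$ is only lower semicontinuous on $\mathcal{M}$ (inherited from $H_r$), so Proposition \ref{prop:ergcge} cannot be applied to it directly. To bypass this, fix $\pi$ and use the explicit construction recalled in Subsection \ref{de} to write $e_q(\pi,\cdot)$ as a pointwise increasing limit of continuous functions $h_\ell^\pi\nearrow e_q(\pi,\cdot)$ in $C(\mathcal{M})$. For any $(\underline{i},O)\in A$ and any $\ell$, the ergodic-average identity applied to $h_\ell^\pi$ gives
\[
\liminf_{N\to\infty}\frac{1}{N}\sum_{n=1}^{N} e_q\bigl(\pi,O\cdot O_{\underline{i}|_n}(x_{\sigma^n\underline{i}})\Phi_q\mu_q\bigr)\ge \mathbb{E}_{\mu_q\times\xi}\bigl(h_\ell^\pi(O\Phi_q\mu_q)\bigr),
\]
since the left-hand side dominates the corresponding $h_\ell^\pi$-average term by term. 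Letting $\ell\to\infty$, the monotone convergence theorem sends the right-hand side to $\mathbb{E}_{\mu_q\times\xi}\bigl(e_q(\pi,O\Phi_q\mu_q)\bigr)=E_q(\pi)$, establishing the reduced inequality for this $\pi$.

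Finally, the extension from a.e.\ $\pi$ to every $\pi$ is automatic: the full-measure set $A$ is defined through convergence of ergodic averages for \emph{all} continuous test functions at once, so the same $A$ serves every $\pi\in\Pi_{d,k}$ and every $\ell\ge 1$ in the monotone-approximation step above. Hence on $A$ the reduced lower bound holds for all $\pi\in\Pi_{d,k}$ simultaneously, and plugging it back into the rearrangement in the first paragraph yields the stated inequality.
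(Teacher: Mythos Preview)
Your proposal is correct and follows essentially the same route as the paper's proof: both combine Lemma~\ref{lemma:zoom} with the ergodicity of the compact group extension $(\Lambda_q^\mathbb{N}\times G,\mu_q\times\xi,\sigma_{\phi_q})$ from Lemma~\ref{lemma:ergodicq}, approximate the lower semicontinuous $e_q(\pi,\cdot)$ from below by continuous functions, invoke Proposition~\ref{prop:ergcge} on each approximant, and pass to the limit via monotone convergence. Your write-up is in fact slightly more explicit than the paper's in spelling out why the same full-measure set works for all $\pi\in\Pi_{d,k}$ simultaneously.
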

\begin{proof}
Since $e_q$ is lower semicontinuous, applying Proposition \ref{prop:ergcge} under the dynamical system $(\Lambda_q^\mathbb{N}\times G,\mathcal{B}\otimes\mathcal{B}_G,\mu_q\times\xi,\sigma_{\phi_q})$, which is ergodic by Lemma \ref{lemma:ergodicq}, to a sequence of continuous functions approximating $e_q$ from below and using the monotone convergence theorem, we have that for $\xi$-a.e. $O$ and $\mu_q$-a.e. $\underline{i}$,
\begin{equation}
\label{eqn:lowerbound}
\lim_{N\to\infty}\inf\frac{1}{N}\sum_{n=0}^{N-1}e_q(\pi,O\cdot O_{\underline{i}|_n}(x_{\sigma^n\underline{i}})\Phi_q\mu_q)\ge E_q(\pi)\;\;\text{ for all }\pi\in\Pi_{d,k}.
\end{equation}
Hence, by Lemma \ref{lemma:zoom}, for all $q\ge 1$, for $\xi$-a.e. $O\in G$ and $\mu_q$-a.e. $\underline{i}\in\Lambda_q^\mathbb{N}$,
\begin{align*}
\frac{1}{-\log C_1^2+q\log(1/\rho)}\left[\log c_q+c_q\liminf_{N\to\infty}\frac{1}{N}\sum_{n=1}^NH_{\left(\underline{r}\rho^q\right)^{n+1}}(\pi O\Phi_q\mu_{q,[\underline{i}|_n]})\right]\ge E_q(\pi)\numberthis\label{eqn:lowbound}
\end{align*}
for all $\pi\in\Pi_{d,k}$, which yields the conclusion.
\end{proof}

\begin{theorem}
\label{thm:lowerbound}
We have for all $q\ge 1$, for $\xi$-a.e. $O\in G$,
\begin{align*}
\dim_H(\pi O\Phi\mu)\ge\frac{-\log C_1^2+q\log(1/\rho)}{c_q(q\log(1/\rho)-\log\underline{r})}E_q(\pi)-\frac{\log C'}{q\log(1/\rho)-\log\underline{r}}-\frac{\log c_q}{c_q(q\log(1/\rho)-\log\underline{r})}
\end{align*}
for all $\pi\in\Pi_{d,k}$, where $C'$ is a constant depending only on $C_2\cdot R$, and $c_q$ is given in (\ref{eqn:quasibern'}).
\end{theorem}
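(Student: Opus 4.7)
The plan is to convert the averaged scaling-entropy bound of Proposition \ref{prop:lowbound} into a pointwise almost-sure lower bound on the local lower dimension of $\nu:=\pi O\Phi\mu=\pi O\Phi_q\mu_q$, by exploiting the cylinder decomposition of $\mu_q$ together with bounded distortion \eqref{eqn:diam} and the quasi-Bernoulli property \eqref{eqn:quasibern'}. Let $s$ denote the right-hand side of the claimed inequality. Since $\nu$ is the push-forward of $\mu_q$ under $\pi O\Phi_q$, it suffices to prove $\underline{D}_\nu(\pi O\Phi_q(\underline{j})) \ge s$ for $\xi$-a.e.\ $O$ and $\mu_q$-a.e.\ $\underline{j}\in\Lambda_q^\mathbb{N}$.

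Fix such $O$ and $\underline{j}$, write $x = \pi O\Phi_q(\underline{j})$, and for small $r > 0$ let $n = n(r)$ be the unique integer with $(\underline{r}\rho^q)^{n+1} \le r < (\underline{r}\rho^q)^n$, so $-\log r \le (n+1)(q\log(1/\rho) - \log\underline{r})$. Starting from the level-$n$ cylinder decomposition
\[
\nu(B(x,r)) = \sum_{i\in\Lambda_q^n} \mu_q([i])\,\pi O\Phi_q\mu_{q,[i]}(B(x,r)),
\]
I aim to establish a cylinder-comparison of the form
\[
\nu(B(x,r)) \le C'\cdot \pi O\Phi_q\mu_{q,[\underline{j}|_n]}(B(x,r)).
\]
By bounded distortion \eqref{eqn:diam}, the contraction ratios $\overline{r}_i$ of all level-$n$ cylinders whose images $\pi O f_i(V)$ can meet $B(x,r)$ are mutually comparable up to a factor depending only on $C_2R$. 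The quasi-Bernoulli property \eqref{eqn:quasibern'} then makes their $\mu_q$-masses comparable to $\mu_q([\underline{j}|_n])$ within a factor of $c_q$. Combined with Lemma \ref{lemma:projdis}, which lets one compare the local structure of the conditional measures $\pi O\Phi_q\mu_{q,[i]}$ near $x$, these ingredients reduce the displayed sum to a constant multiple of the term at $i=\underline{j}|_n$, the constant $C'$ absorbing the distortion, quasi-Bernoulli, and ambient counting contributions.

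Taking $-\log$ of this comparison and dividing by $(n+1)(q\log(1/\rho) - \log\underline{r})\ge -\log r$, and using that for $\mu_q$-a.e.\ $\underline{j}$ the point $x$ is a $\pi O\Phi_q\mu_{q,[\underline{j}|_n]}$-typical point so that the pointwise value $-\log \pi O\Phi_q\mu_{q,[\underline{j}|_n]}(B(x,r))$ is controlled by the integrated quantity $H_{(\underline{r}\rho^q)^{n+1}}(\pi O\Phi_q\mu_{q,[\underline{j}|_n]})$, one obtains
\[
\underline{D}_\nu(x) \ge \liminf_{N\to\infty}\frac{1}{N}\sum_{n=1}^N\frac{H_{(\underline{r}\rho^q)^{n+1}}(\pi O\Phi_q\mu_{q,[\underline{j}|_n]})}{(n+1)(q\log(1/\rho) - \log\underline{r})} - \frac{\log C'}{q\log(1/\rho) - \log\underline{r}}
\]
via a Fatou-type passage from the Cesaro average to the pointwise liminf. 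Applying Proposition \ref{prop:lowbound} with $\underline{i} = \underline{j}$ then supplies the required lower bound on the first term, yielding exactly $\underline{D}_\nu(x) \ge s$ and hence $\dim_H(\pi O\Phi\mu) \ge s$.

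The main obstacle is the cylinder-comparison estimate $\nu(B(x,r))\le C'\pi O\Phi_q\mu_{q,[\underline{j}|_n]}(B(x,r))$: without a separation condition, many cylinders at level $n$ can project near $x$, and their conditional measures are all different. Controlling the sum cleanly requires combining bounded distortion (to equate the contraction scales of nearby cylinders), the quasi-Bernoulli property (to equate their symbolic masses), and Lemma \ref{lemma:projdis} (to transport ball masses between the conditional measures), all of which get absorbed into the single geometric constant $C'$ depending only on $C_2R$. A secondary technical point is the Fatou-type passage from the Cesaro-averaged entropy bound to an almost-sure pointwise liminf at the specific point $x$, which leans on the lower semicontinuity of $H_r$ already used in Proposition \ref{prop:ergcge}.
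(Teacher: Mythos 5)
Your reduction hinges on the cylinder-comparison estimate $\nu(B(x,r))\le C'\,\pi O\Phi_q\mu_{q,[\underline{j}|_n]}(B(x,r))$, and this is where the argument breaks. Bounded distortion \eqref{eqn:diam} and quasi-Bernoulli \eqref{eqn:quasibern'} only tell you that the level-$n$ cylinders meeting $B(x,r)$ have comparable contraction ratios and that ratios of their symbolic masses are controlled; they say nothing about how the \emph{projected conditional measures} $\pi O\Phi_q\mu_{q,[i]}$ distribute their mass near $x$. Without any separation condition (which is precisely the regime the theorem is meant to cover), exponentially many cylinders $[i]$ at level $n$ can send mass into $B(x,r)$, and there is no mechanism forcing their total contribution to be dominated by the single term at $i=\underline{j}|_n$: a cylinder distinct from $[\underline{j}|_n]$ can put almost all of its conditional mass inside $B(x,r)$ while $[\underline{j}|_n]$ puts almost none. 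Lemma \ref{lemma:projdis} compares $\pi f_i$ with its linearization on a \emph{fixed} cylinder; it does not transport ball masses between different conditional measures. So the inequality, with $C'$ depending only on $C_2R$, is not provable by these ingredients and is false in general. A second, related gap is your ``Fatou-type passage'': the claim that the pointwise value $-\log\pi O\Phi_q\mu_{q,[\underline{j}|_n]}(B(x,r))$ at the specific point $x$ is controlled by the integrated entropy $H_{(\underline{r}\rho^q)^{n+1}}(\pi O\Phi_q\mu_{q,[\underline{j}|_n]})$ is not an almost-everywhere statement that comes for free (the conditioning cylinder changes with $n$, and the entropy is an average with respect to the conditional measure itself, not an evaluation at $x$). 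Converting Cesaro averages of scale-$n$ conditional entropies into a pointwise lower bound on local dimension is exactly the content of the local entropy averages theorem, and its proof (a martingale law-of-large-numbers argument) works on tree/cylinder structures where the sets at successive scales are nested --- not directly for the projected measure in $\mathbb{R}^k$, where overlaps are the whole problem.

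The paper's proof circumvents both issues by never attempting a pointwise ball estimate for $\pi O\Phi\mu$ in $\mathbb{R}^k$. It notes that $f=\pi O\Phi_q:(\Lambda_q^{\mathbb{N}},d_{\rho^q})\to\mathbb{R}^k$ is $C_2R$-Lipschitz and factors it, via \cite[Theorem 5.4]{HochShmer}, as $f=f'h$ through a $\rho^q$-tree, with $h$ a tree morphism and $f'$ faithful. Faithfulness gives \eqref{eqn:cq}, i.e.\ the conditional entropies of $f\mu_{q,[\underline{i}|_n]}$ and $h\mu_{q,[\underline{i}|_n]}$ differ by at most $C'$ (\cite[Proposition 5.3]{HochShmer}); Proposition \ref{prop:lowbound} then gives a lower bound on the averaged entropies of $h\mu_{q,[\underline{i}|_n]}$; the local entropy averages theorem for trees \cite[Theorem 4.4]{HochShmer} converts this into $\dim_H h\mu_q\ge s$; and faithfulness again (\cite[Proposition 5.2]{HochShmer}) transfers the dimension bound from $h\mu_q$ to $f\mu_q=\pi O\Phi\mu$. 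If you want to salvage your outline, you must either import this tree/faithful-map machinery or reprove an analogue of the local entropy averages lemma directly for overlapping projected cylinder structures --- the two steps you treated as routine are precisely the theorem's real content.
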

\begin{proof}
First we pick a $\xi$-typical $O\in G$ such that the statement of Proposition \ref{prop:lowbound} holds. The mapping $f\equiv\pi O\Phi_q:(\Lambda^{\mathbb{N}}_q,d_{\rho^q})\to\mathbb{R}$ is $C_2\cdot R$-Lipschitz. By \cite[Theorem 5.4]{HochShmer}, there exist a $\rho^q$-tree $(X,d_{\rho^q})$ and maps $\Lambda_q^{\mathbb{N}}\xrightarrow{h}X\xrightarrow{f'}\mathbb{R}^k$ such that $f=f'h$, where $h$ is a tree morphism and $f'$ is $C$-faithful (see \cite[Definition 5.1]{HochShmer}) for some constant $C$ depending on only $C_2\cdot R$. Then applying \cite[Proposition 5.3]{HochShmer} to the $\underline{r}\rho^q$-tree $\left(X,d_{\underline{r}\rho^q}\right)$ (for which $f'$ is $\underline{r}^{-1}C$-faithful), there is a $C'$ depending only on $\underline{r}^{-1} C$ such that for all $n\ge1$,
\begin{equation}
\label{eqn:cq}
\left|H_{\left(\underline{r}\rho^q\right)^{n+1}}(f\mu_{q,[\underline{i}|_n]})-H_{\left(\underline{r}\rho^q\right)^{n+1}}(h\mu_{q,[\underline{i}|_n]})\right|\le C'.
\end{equation}
Then, using Proposition \ref{prop:lowbound}, for $\mu_q$-a.e. $\underline{i}$,
\begin{align*}
\frac{1}{-\log C_1^2+q\log(1/\rho)}&\liminf_{N\to\infty}\frac{1}{N}\sum_{n=1}^NH_{\left(\underline{r}\rho^q\right)^{n+1}}(h\mu_{q,[\underline{i}|_n]})\\
&\ge \frac{1}{c_q}E_q(\pi)-\frac{\log C'}{-\log C_1^2+q\log(1/\rho)}-\frac{\log c_q}{c_q(-\log C_1^2+q\log(1/\rho))}
\end{align*}
for all $\pi\in\Pi_{d,k}$. Now, using \cite[Theorem 4.4]{HochShmer}, it follows that
\begin{align*}
\dim_Hh\mu_q\ge\frac{-\log C_1^2+q\log(1/\rho)}{c_q(q\log(1/\rho)-\log\underline{r})}E_q(\pi)-\frac{\log C'}{q\log(1/\rho)-\log\underline{r}}-\frac{\log c_q}{c_q(q\log(1/\rho)-\log\underline{r})}
\end{align*}
for all $\pi\in\Pi_{d,k}$. Since $f'$ is $C$-faithful and $f'h\mu_q=f\mu_q=\pi\Phi_q\mu_q=\pi\Phi\mu,$ the conclusion follows from \cite[Proposition 5.2]{HochShmer}.
\end{proof}
\subsection{Projection theorems}
With the approach used in Subsection \ref{subsec:lowerbound}, we avoid the need for any separation condition in our projection results. For $\pi\in\Pi_{d,k}$ we have
\begin{align}
E_q(\pi)=&\mathbb{E}_{\mu_q\times \xi}(e_q(\pi,O\Phi_q\mu_q))\nonumber\\
=&\int_G\frac{1}{-\log C_1^2+q\log(1/\rho)}H_{C_1^2\rho^{q}}(\pi O\Phi_q\mu_q)\, \xi(dO)\nonumber\\
=&\int_G\frac{1}{-\log C_1^2+q\log(1/\rho)}H_{C_1^2\rho^{q}}(\pi O\Phi\mu)\, \xi(dO),\label{eqpi}
\end{align}
where we have used the fact that $\Phi_q\mu_q=\Phi\mu$ for all $q\ge 1$.


\begin{theorem}
\label{thm:dimequalse}
The limit
\[
E(\pi):=\lim_{q\to\infty}E_q(\pi)
\]
exists for every $\pi\in\Pi_{d,k}$. Moreover:
\begin{enumerate}[(i)]
\item For a fixed $\pi\in\Pi_{d,k}$, for $\xi$-a.e. $O$,
\[
\dim_e\pi O\Phi\mu=\dim_H\pi O\Phi \mu=E(\pi).
\]
\item For $\xi$-a.e. $O$,
\[\dim_H\pi O\Phi \mu\ge E(\pi)\;\;\text{ for all }\pi\in\Pi_{d,k}.\]
\end{enumerate}
\end{theorem}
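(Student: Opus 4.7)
My plan is to combine the lower bound of Theorem \ref{thm:lowerbound} with a Fatou-type upper bound on the lower entropy dimension, and then squeeze to deduce the existence of $E(\pi)$ together with both assertions at once.

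First I would apply Theorem \ref{thm:lowerbound} for each $q\ge 1$: on a $q$-dependent $\xi$-full measure subset of $G$, the inequality
\[
\dim_H(\pi O\Phi\mu)\ \ge\ \frac{-\log C_1^2+q\log(1/\rho)}{c_q(q\log(1/\rho)-\log\underline{r})}\,E_q(\pi)\ -\ \frac{\log C'+\log c_q/c_q}{q\log(1/\rho)-\log\underline{r}}
\]
holds simultaneously for all $\pi\in\Pi_{d,k}$. Taking a countable intersection over $q\in\mathbb{N}$ produces a single $\xi$-full measure set of $O$'s on which the bound holds for every $q$ and every $\pi$. Because $n_q\to\infty$ forces $c_q\to 1$ through \eqref{eqn:quasibern'}, and because the denominator diverges, the coefficient of $E_q(\pi)$ tends to $1$ and the additive error to $0$. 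Passing to $\limsup_q$ should therefore yield
\begin{equation*}
\dim_H(\pi O\Phi\mu)\ \ge\ \limsup_{q\to\infty} E_q(\pi)\qquad\text{for all }\pi\in\Pi_{d,k},\ \xi\text{-a.e. }O. \tag{$\star$}
\end{equation*}

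For the matching upper bound I would fix any $\pi$ and $O$ and evaluate the $\liminf$ defining $\dim_e$ along the subsequence $r=C_1^2\rho^q\to 0$, which gives $\dim_e(\pi O\Phi\mu)\le \liminf_q H_{C_1^2\rho^q}(\pi O\Phi\mu)/(-\log C_1^2+q\log(1/\rho))$. Since the integrands are non-negative, Fatou's lemma combined with identity \eqref{eqpi} yields
\[
\int_G \dim_e(\pi O\Phi\mu)\,\xi(dO)\ \le\ \liminf_{q\to\infty}E_q(\pi).
\]

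Finally, integrating $(\star)$ over $\xi$ and chaining $\dim_H\le\dim_e$ produces the squeeze
\[
\limsup_{q\to\infty}E_q(\pi)\ \le\ \int_G \dim_H(\pi O\Phi\mu)\,\xi(dO)\ \le\ \int_G \dim_e(\pi O\Phi\mu)\,\xi(dO)\ \le\ \liminf_{q\to\infty}E_q(\pi),
\]
forcing $\liminf_q E_q(\pi)=\limsup_q E_q(\pi)=:E(\pi)$, which is the first claim of the theorem. Every inequality above is then an equality, so $\int_G(\dim_e-\dim_H)\,d\xi=0$; combined with the pointwise bound $\dim_H(\pi O\Phi\mu)\ge E(\pi)$ a.e.\ from $(\star)$, this forces $\dim_H(\pi O\Phi\mu)=\dim_e(\pi O\Phi\mu)=E(\pi)$ for $\xi$-a.e.\ $O$, establishing (i). Assertion (ii) is then immediate from $(\star)$ once the limit is known to exist. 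The only delicate point I anticipate is verifying that the multiplicative coefficient and additive error from Theorem \ref{thm:lowerbound} genuinely converge to $1$ and $0$ as $q\to\infty$ uniformly enough to carry the $\limsup$ through; but this is a routine consequence of $n_q\to\infty$ via \eqref{eqn:nq} and the explicit form of $c_q$ from \eqref{eqn:quasibern'}, so it is bookkeeping rather than a genuine obstacle.
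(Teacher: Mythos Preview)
Your proposal is correct and follows essentially the same route as the paper: derive the pointwise lower bound $(\star)$ from Theorem~\ref{thm:lowerbound} via a countable intersection over $q$ and $c_q\to 1$, bound the $\xi$-average of $\dim_e$ by $\liminf_q E_q(\pi)$ using Fatou's lemma applied to \eqref{eqpi}, and squeeze with $\dim_H\le\dim_e$ to force the limit to exist and all inequalities to collapse. Your final paragraph spelling out how (i) follows from the collapsed chain (nonnegative integrand with zero integral, then a.e.\ lower bound with matching integral) is a welcome elaboration of what the paper leaves as ``follow directly''.
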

\begin{proof}
For all integers $q\ge 1$, by Theorem \ref{thm:lowerbound} we have that for $\xi$-a.e. $O\in G$,
\begin{align*}
\dim_H(\pi O\Phi\mu)\ge\frac{-\log C_1^2+q\log(1/\rho)}{c_q(q\log(1/\rho)-\log\underline{r})}E_q(\pi)-\frac{\log C'}{q\log(1/\rho)-\log\underline{r}}-\frac{\log c_q}{c_q(q\log(1/\rho)-\log\underline{r})}
\label{eqn:dimlowbound}
\end{align*}
for all $\pi\in\Pi_{d,k}$. As integers $\{q\ge 1\}$ are countable, we may take a $\xi$-full set such that the above statement is true for all $q\ge 1$. Since $C_1$, $C'$ and $\underline{r}$ do not depend on $q$ and $c_q\to 1$ as $q\to \infty$, we obtain for $\xi$-a.e. $O\in G$,
\begin{equation}
\label{eqn:qinf}
\dim_H(\pi\Phi O\mu)\ge\limsup_{q\to\infty} E_q(\pi)
\end{equation}
for all $\pi\in\Pi_{d,k}$. On the other hand, we also know that $\dim_e(\nu)\ge\dim_H(\nu)$ for any Borel probability measure $\nu$. Thus, applying Fatou's lemma to \eqref{eqpi}, we have
\begin{align*}
\limsup_{q\to\infty}E_q(\pi)&\le\mathbb{E}_{\xi}(\dim_H(\pi O\Phi\mu))\\
&\le\mathbb{E}_{\xi}(\dim_e(\pi O\Phi\mu))\\
&\le\liminf_{q\to\infty}E_q(\pi).
\end{align*}
This shows that $\lim_{q\to\infty}E_q(\pi)$ exists for all $\pi\in\Pi_{d,k}$, and (i) and (ii) follow directly.
\end{proof}


Write $\beta=\min\{k,\dim_H \Phi\mu\}$.

\begin{corollary}\label{cor:dim}
(i) $E(\pi)=\beta$ for all $\pi\in \Pi_{d,k}$; (ii) $\dim_H(\pi \Phi\mu)=\beta \text{ for all } \pi \in \Pi_{d,k}$.
\end{corollary}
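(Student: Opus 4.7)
The plan is to deduce both (i) and (ii) from the classical Marstrand--Mattila projection theorem combined with Theorem \ref{thm:dimequalse}. The trivial upper bound $\dim_H \pi O \Phi\mu \le \min\{k,\dim_H \Phi\mu\}=\beta$ is immediate because each orthogonal projection is $1$-Lipschitz with image in a $k$-dimensional subspace; by Theorem \ref{thm:dimequalse}(i) this also forces $E(\pi)\le\beta$ for every $\pi\in\Pi_{d,k}$.

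For the matching lower bound in (i), I would fix $\pi=P_W\in\Pi_{d,k}$, the orthogonal projection onto a $k$-dimensional subspace $W\subset\R^d$, and exploit the elementary identity $P_W\,O=O\,P_{O^{-1}W}$ for every $O\in G$. This follows from decomposing $v\in\R^d$ as $v_1+v_2$ with $v_1\in O^{-1}W$ and $v_2\in(O^{-1}W)^\perp$, noting that $O$ carries $(O^{-1}W)^\perp$ isometrically into $W^\perp$. Since $O|_{O^{-1}W}: O^{-1}W\to W$ is an isometric isomorphism, $\dim_H \pi O \Phi\mu = \dim_H P_{O^{-1}W}\Phi\mu$. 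As $O$ ranges over $G$ with Haar measure $\xi$, the subspace $O^{-1}W$ is equidistributed with respect to the unique $G$-invariant probability measure on the Grassmannian (by transitivity of the $G$-action on $\Pi_{d,k}$ and $G$-invariance of $\xi$), so Marstrand's theorem applied to the Borel measure $\Phi\mu$ yields $\dim_H P_{O^{-1}W}\Phi\mu=\beta$ for $\xi$-a.e.\ $O$. Comparing with Theorem \ref{thm:dimequalse}(i) then forces $E(\pi)=\beta$.

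For (ii), having established $E\equiv\beta$, Theorem \ref{thm:dimequalse}(ii) supplies a $\xi$-full set of $O\in G$ for which $\dim_H \pi O \Phi\mu\ge\beta$ simultaneously for all $\pi\in\Pi_{d,k}$; fix any such $O_0$. To upgrade this to the desired bound for an arbitrary $\pi'=P_{W'}\in\Pi_{d,k}$, I would apply the identity in reverse: setting $\pi=P_{O_0 W'}\in\Pi_{d,k}$, one computes $\pi O_0=O_0 P_{W'}$, whence $\dim_H \pi'\Phi\mu=\dim_H O_0 P_{W'}\Phi\mu=\dim_H \pi O_0\Phi\mu\ge\beta$. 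Combined with the upper bound from the first paragraph this yields $\dim_H\pi'\Phi\mu=\beta$ for every $\pi'\in\Pi_{d,k}$.

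I anticipate no real obstacle here: the only non-trivial ingredient is the classical Marstrand--Mattila theorem (which applies to the Borel measure $\Phi\mu$ whose Hausdorff dimension is well-defined since $\Phi\mu$ is exact dimensional), and the rest is an elementary orthogonal decomposition. The conceptual point worth emphasising is that the homogeneity of $\Pi_{d,k}$ under the $G$-action upgrades Theorem \ref{thm:dimequalse}(ii) from an ``almost every $\pi$'' statement into an ``every $\pi$'' statement: a single generic rotation $O_0$ is enough to conjugate every projection back into the generic family, and this is precisely where (A2), via the ergodicity of the skew product, pays off.
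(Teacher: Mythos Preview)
Your proof is correct and follows essentially the same approach as the paper: Marstrand--Mattila for measures (using exact dimensionality of $\Phi\mu$) together with Theorem~\ref{thm:dimequalse}(i) gives $E(\pi)=\beta$, and then Theorem~\ref{thm:dimequalse}(ii) plus the transitivity of the $G$-action on $\Pi_{d,k}$ upgrades this to all projections. You spell out the conjugation identity $P_W\,O=O\,P_{O^{-1}W}$ and the passage from ``for a.e.\ $O$, all $\pi$'' to ``all $\pi$'' more explicitly than the paper does, but the argument is the same.
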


\begin{proof}
(i) For a fixed $\pi\in \Pi_{d,k}$, since $\Phi\mu$ is exact-dimensional (see \cite{FengHu}), using Theorem \ref{thm:dimequalse} (i) and applying the Marstrand's projection theorem for measures (see \cite{HuntKaloshin}), for $\xi$-a.e. $O\in SO(d,\mathbb{R})$,
\[
E(\pi)=\dim_H \pi O\Phi\mu=\min\{k,\dim_H \Phi\mu\}=\beta.
\]
This implies that $E(\pi)=\beta$ for all $\pi\in \Pi_{d,k}$.

(ii) By (i) and Theorem \ref{thm:dimequalse}(ii), we have for $\xi$-a.e. $O\in SO(d,\mathbb{R})$,
$$\dim_H\pi O\Phi\mu\ge \beta\;\;\text{ for all }\pi\in\Pi_{d,k}.$$
Then we get the conclusion from the fact that $\beta \ge \dim_H\pi O\Phi\mu$ for any $\pi\in\Pi_{d,k}$ and $O\in SO(d,\mathbb{R})$.
\end{proof}

For $C^1$-images, we need the following Proposition:
\begin{proposition}\label{prop:nonli}
Let $\pi\in \Pi_{d,k}$. For all $C^1$-maps $h:K\mapsto \mathbb{R}^k$ such that $\sup_{x\in K}\|D_xh -\pi\|<c\rho^q$, we have that for all $q\ge 1$, for $\xi$-almost every $O$,
\begin{align*}
\dim_H & h O\Phi\mu\ge \\
&\frac{-\log C_1^2+q\log(1/\rho)}{c_q(q\log(1/\rho)-\log\underline{r})}E_q(\pi)-\frac{\log C'}{q\log(1/\rho)-\log\underline{r}}-\frac{\log c_q}{c_q(q\log(1/\rho)-\log\underline{r})}-O(1/q),
\end{align*}
where the constant $O(1/q)$ only depends on $\rho,c$ and $k$.
\end{proposition}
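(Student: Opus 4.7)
The approach is to repeat the chain Lemma \ref{lemma:zoom} $\to$ Proposition \ref{prop:lowbound} $\to$ Theorem \ref{thm:lowerbound} with $h$ in place of $\pi$, absorbing the non-linearity into $O(1)$ additive corrections that become the advertised $O(1/q)$ error after the final normalisation by $q\log(1/\rho)-\log\underline{r}=\Theta(q)$. The key quantitative input is the mean value inequality, applied to the hypothesis $\sup_{x\in K}\|D_xh-\pi\|<c\rho^q$ on the convex $V$:
\[|h(x)-h(y)-\pi(x-y)|\le c\rho^q|x-y|\quad\text{for all }x,y\in V.\]

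The first step is an $h$-analogue of Lemma \ref{lemma:zoom}. Set $g_h=hO\cdot O_{\underline{i}|_n}(x_{\sigma^n\underline{i}})\Phi_q$ and let $g=\pi O\cdot O_{\underline{i}|_n}(x_{\sigma^n\underline{i}})\Phi_q$ be the map from Lemma \ref{lemma:zoom}. Applying the above inequality to the rotated points entering $g$ and $g_h$, which lie in a set of diameter $\le R$, gives
\[|(g_h-g)(\sigma^n\underline{j})-(g_h-g)(\sigma^n\underline{k})|\le cR\rho^q,\]
so the key set inclusion in the proof of Lemma \ref{lemma:zoom} upgrades to
\[\{\underline{j}:|\pi Ox_{\underline{j}}-\pi Ox_{\underline{k}}|\le(\underline{r}\rho^q)^{n+1}\}\subset\{\underline{j}:|g_h(\sigma^n\underline{j})-g_h(\sigma^n\underline{k})|\le(C_1^2+cR)\rho^q\}.\]
Since enlarging the outer scale by a bounded factor changes the $r$-scaling entropy in $\mathbb{R}^k$ by only an $O(1)$ quantity, this yields
\[H_{C_1^2\rho^q}(g_h\mu_q)\le\log c_q+c_qH_{(\underline{r}\rho^q)^{n+1}}(\pi O\Phi_q\mu_{q,[\underline{i}|_n]})+O(1),\]
with $O(1)$ depending only on $c,\rho,k$ and the fixed background constants. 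Crucially $\pi$ (not $h$) appears on the right, so $E_q(\pi)$ reappears in the ergodic mean---this is what preserves the $E_q(\pi)$ in the statement. Proposition \ref{prop:lowbound} therefore reproduces with the same $E_q(\pi)$ and one additional $O(1)$ on the right.

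For the passage to $\dim_H hO\Phi\mu$, the tree-morphism step of Theorem \ref{thm:lowerbound} applies to $f=hO\Phi_q:(\Lambda_q^\mathbb{N},d_{\rho^q})\to\mathbb{R}^k$, which is Lipschitz with constant depending only on $c$ and the background constants (via $\sup_{V}\|D_xh\|\le\|\pi\|+c$). The $C$-faithful factorisation from \cite[Theorem 5.4]{HochShmer} and the entropy-to-dimension passage via \cite[Theorem 4.4]{HochShmer} and \cite[Proposition 5.2]{HochShmer} are unchanged, producing a lower bound for $\dim_H hO\Phi\mu$ that matches the one in Theorem \ref{thm:lowerbound} except for the accumulated additive $O(1)$ in entropy; dividing by $q\log(1/\rho)-\log\underline{r}=\Theta(q)$ yields the advertised $O(1/q)$. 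The main obstacle is confirming that the defect $|h-\pi|$ on level-$n$ cylinders, which is $O(c\rho^q\overline{r}_{\underline{i}|_n})$ by the mean value inequality combined with (\ref{eqn:diam}), stays comparable to (rather than much larger than) the zoom-in scale $\overline{r}_{\underline{i}|_n}\rho^q$ appearing in Lemma \ref{lemma:zoom}; this is exactly what the hypothesis' factor $\rho^q$ guarantees, so the error does not compound with $n$ and instead collapses to a single additive $O(1)$ in the entropy estimate.
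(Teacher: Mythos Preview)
Your chain has the roles of $h$ and $\pi$ reversed in the modification of Lemma~\ref{lemma:zoom}, and this breaks the argument at the tree step. In the paper's structure, the \emph{left} side $H_{C_1^2\rho^q}(g\mu_q)$ of Lemma~\ref{lemma:zoom} is the quantity whose ergodic average along the $\sigma_{\phi_q}$-orbit produces $E_q(\pi)$ (this is \eqref{eqn:lowerbound}), while the \emph{right} side $H_{(\underline{r}\rho^q)^{n+1}}(\pi O\Phi_q\mu_{q,[\underline{i}|_n]})$ is the conditional entropy that the tree machinery of Theorem~\ref{thm:lowerbound} converts into $\dim_H \pi O\Phi\mu$. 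You replace $g$ by $g_h$ on the left and keep $\pi O$ on the right; your analogue of Proposition~\ref{prop:lowbound} therefore still bounds the averages of $H_{(\underline{r}\rho^q)^{n+1}}(\pi O\Phi_q\mu_{q,[\underline{i}|_n]})$, not of $H_{(\underline{r}\rho^q)^{n+1}}(hO\Phi_q\mu_{q,[\underline{i}|_n]})$. When you then apply \cite[Theorem 5.4, Theorem 4.4, Proposition 5.2]{HochShmer} to $f=hO\Phi_q$, you need the latter, and there is no $O(1)$ comparison between the two at scale $(\underline{r}\rho^q)^{n+1}$: on a cylinder with $\overline{r}_{\underline{i}|_n}\approx(\rho^q)^n$ the defect $c\rho^q\,\overline{r}_{\underline{i}|_n}$ exceeds the entropy scale by a factor $\sim\underline{r}^{-(n+1)}$, which blows up with $n$.

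The fix (and the content of \cite[Proposition 8.4]{HochShmer} that the paper invokes) is to swap the modification: keep $g$ on the left so the ergodic step still yields $E_q(\pi)$, and insert $hO$ on the right. Concretely, the claim in Lemma~\ref{lemma:zoom} becomes: if $|hOx_{\underline{j}}-hOx_{\underline{k}}|\le \overline{r}_{\underline{i}|_n}\rho^q$ then $|g(\sigma^n\underline{j})-g(\sigma^n\underline{k})|\le C_1^2(1+cC_2R)\rho^q$, which follows since $|\pi Ox_{\underline{j}}-\pi Ox_{\underline{k}}|\le |hOx_{\underline{j}}-hOx_{\underline{k}}|+c\rho^q\,C_2R\,\overline{r}_{\underline{i}|_n}\le(1+cC_2R)\overline{r}_{\underline{i}|_n}\rho^q$ and the original claim scales linearly in the threshold. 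This is exactly where your correct observation about the defect being comparable to $\overline{r}_{\underline{i}|_n}\rho^q$ belongs. The extra factor $(1+cC_2R)$ on the $g$-side costs $O(1)$ in entropy, hence $O(1/q)$ after normalisation, and now the tree machinery applied to $f=hO\Phi_q$ receives precisely the conditional entropies you have bounded.
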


\begin{proof}
The proof is similar to that of \cite[Proposition 8.4]{HochShmer} together with Proposition \ref{prop:lowbound} and Theorem \ref{thm:lowerbound}.
\end{proof}

Corollary \ref{cor:dim} and Proposition \ref{prop:nonli} imply the following.

\begin{corollary}
For all $C^1$-maps $h:K\to \mathbb{R}^k$ without singular points,
\[
\dim_H h\Phi\mu = \min\{k,\dim_H \Phi\mu\}.
\]
\end{corollary}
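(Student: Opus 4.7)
The upper bound $\dim_H h\Phi\mu\le\min\{k,\dim_H\Phi\mu\}=:\beta$ is immediate: $h$ is Lipschitz on the compact set $K$ and takes values in $\mathbb{R}^k$. The substance of the corollary is the lower bound $\dim_H h\Phi\mu\ge\beta$, and the plan is to reduce this to Proposition \ref{prop:nonli} (the $C^1$-version of the projection theorem for maps uniformly close to a fixed projection) combined with Corollary \ref{cor:dim}(i), which ensures $E_q(\pi)\to\beta$ as $q\to\infty$.

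The strategy is localization and linearization on cylinders. Fix $q\ge 1$ large. For each cylinder $[i]$ with $i\in\Lambda_q$ and $\mu([i])>0$, pick a reference point $x_i^*\in K$, set $y_i=f_i(x_i^*)\in f_i(K)$, and use the full-rank assumption on $D_{y_i}h$ to decompose $D_{y_i}h\cdot O_i(x_i^*)=A_i\pi_i$ via SVD, with $A_i\in\mathrm{GL}(k,\mathbb{R})$ and $\pi_i\in\Pi_{d,k}$. Define the rescaled map
\[
\tilde h_i(x)=A_i^{-1}\overline{r}_i^{-1}\bigl(h(f_i(x))-h(y_i)\bigr),\qquad x\in K.
\]
A first-order Taylor expansion of $h$ at $y_i$, together with the bounded distortion \eqref{eqn:bp}, the Lipschitz dependence of $r_i(\cdot),O_i(\cdot)$ on $x$ (as in the proof of Lemma \ref{lemma:projdis}), and the continuity of $Dh$, yields that $D_x\tilde h_i$ is close to $\pi_i$ at the local scale $\rho^q$ in the sense required by Proposition \ref{prop:nonli} (interpreted as in its proof, modeled on Hochman-Shmerkin \cite[Prop.\ 8.4]{HochShmer}).

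Apply Proposition \ref{prop:nonli} to $\tilde h_i$ with projection $\pi_i$, let $q\to\infty$ so that $E_q(\pi_i)\to\beta$ by Corollary \ref{cor:dim}(i), and conclude that for $\xi$-a.e.\ $O\in G$, $\dim_H\tilde h_iO\Phi\mu\ge\beta-o_q(1)$. Now transfer the bound back to $h\Phi\mu$: $A_i$ is bi-Lipschitz, $\overline{r}_i^{-1}$ is a positive dilation, and $f_i$ is bi-Lipschitz on $K$ up to bounded-distortion constants by \eqref{eqn:diam}, so each of these operations preserves Hausdorff dimension; meanwhile the quasi-Bernoulli property \eqref{eqn:quasibern'} makes the conditional measure $\mu_{q,[i]}$ equivalent to $\mu$ up to multiplicative constants. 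Combined with the exact-dimensionality of $\Phi\mu$ from \cite{FengHu}, these considerations yield $\dim_H h\Phi\mu\ge\dim_H h\Phi\mu|_{f_i(K)}\ge\beta-o_q(1)$, and sending $q\to\infty$ finishes the proof.

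The main technical obstacle is a hypothesis-mismatch: Proposition \ref{prop:nonli} demands $\sup_{x\in K}\|D_x\tilde h_i-\pi_i\|<c\rho^q$ uniformly on $K$, but the bounded-distortion variation of $r_i(x),O_i(x)$ over $K$ of fixed diameter $O(1)$ is itself $O(1)$ rather than $O(\rho^q)$, so the naive $\tilde h_i$ satisfies the hypothesis only on sub-cylinders of $K$. The cleanest fix is to revisit the proof of Proposition \ref{prop:nonli} and observe that only local-scale entropy estimates enter, so the sup condition may effectively be weakened to a local one; alternatively one layers a further localization over the $\Lambda_q$-structure. A secondary bookkeeping point is the passage from the bound on $\dim_H hO\Phi\mu$ (for $\xi$-typical $O$) to the target $\dim_H h\Phi\mu$ (no $O$), which is handled by a rotation-covering argument paralleling the proof of Corollary \ref{cor:dim}(ii).
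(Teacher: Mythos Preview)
Your approach is essentially the same as the paper's, which proves the corollary in two sentences: the upper bound is trivial since $h$ is $C^1$, and for the lower bound one applies Proposition~\ref{prop:nonli} to the restricted measures $\mu|_{[\underline{i}|_n]}$ for $n$ sufficiently large. Your localization-and-linearization on cylinders, with the rescaled map $\tilde h_i$ and the SVD decomposition $D_{y_i}h\cdot O_i(x_i^*)=A_i\pi_i$, is exactly what ``applied to the restricted measures'' means once unpacked, and follows the template of \cite[Proposition~8.4]{HochShmer} to which the paper defers.

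Two remarks. First, the technical obstacle you flag---that the hypothesis $\sup_{x\in K}\|D_x\tilde h_i-\pi_i\|<c\rho^q$ of Proposition~\ref{prop:nonli} is not literally met because the bounded-distortion variation of $r_i(\cdot),O_i(\cdot)$ over all of $K$ is $O(1)$ rather than $O(\rho^q)$---is a genuine point that the paper's two-line proof does not address. Your proposed fix (revisit the proof of Proposition~\ref{prop:nonli}, which only uses local-scale entropy estimates inherited from Proposition~\ref{prop:lowbound} and Theorem~\ref{thm:lowerbound}) is the correct one; indeed Proposition~\ref{prop:nonli} itself has no proof in the paper beyond a pointer to \cite{HochShmer}, so the hypothesis should be read as a stand-in for the actual local condition used there. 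Second, the passage from a bound on $\dim_H hO\Phi\mu$ for $\xi$-a.e.\ $O$ to one on $\dim_H h\Phi\mu$ is handled exactly as you say, by the rotation-covering argument of Corollary~\ref{cor:dim}(ii): fix a typical $O$, and for the target map $h$ use $h'=h\circ O^{-1}$, which is again $C^1$ without singular points. The paper omits this step as well.
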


\begin{proof}
Since $h$ is a $C^1$-map, $\dim_H h O\Phi\mu \le \min\{k,\dim_H \Phi\mu\}$. The lower bound follows from Proposition \ref{prop:nonli} applied to the restricted measures $\mu|_{[\underline{i}|n]}$ for $\underline{i}\in \Lambda^\mathbb{N}$ and $n\ge 1$ sufficiently large.
\end{proof}
\section*{Acknowledgements}
We would like to thank Julien Barral, Kenneth Falconer, Jonathan Fraser and Thomas Jordan for helpful discussions and comments. Part of this work was completed while the second author was visiting the Institut Mittag-Leffler as part of the `Fractal Geometry and Dynamics' Programme, he would like to thank the organisers and staff members for their hospitality.

\end{document}